\numberwithin{equation}{section}
\newtheorem{theorem}{Theorem}[section]
\newtheorem{lemma}[theorem]{Lemma}
\newtheorem{proposition}[theorem]{Proposition}
\newtheorem{definition}[theorem]{Definition}
\newcommand{\bbN}{{\ensuremath{\mathbb N}} }
\newcommand{\bbR}{{\ensuremath{\mathbb R}} }
\newcommand{\bbZ}{{\ensuremath{\mathbb Z}} }
\newcommand{\cA}{{\ensuremath{\mathcal A}} }
\newcommand{\cB}{{\ensuremath{\mathcal B}} }
\newcommand{\cE}{{\ensuremath{\mathcal E}} }
\newcommand{\cF}{{\ensuremath{\mathcal F}} }
\newcommand{\cG}{{\ensuremath{\mathcal G}} }
\renewcommand{\tilde}{\widetilde}          
\DeclareMathSymbol{\leqslant}{\mathalpha}{AMSa}{"36} 
\DeclareMathSymbol{\geqslant}{\mathalpha}{AMSa}{"3E} 
\DeclareMathSymbol{\eset}{\mathalpha}{AMSb}{"3F}     
\newcommand{\dd}{\text{\rm d}}             
\newcommand{\suptwo}[2]{\sup_{\substack{#1 \\ #2}}} 
\newcommand{\Z}{\Z}
\newcommand{\N}{\N}
\def\bs{\boldsymbol}
\newcommand{\PEfont}{\mathrm}
\DeclareMathOperator{\supess}{\ensuremath{\PEfont supess}}
\def\p{\ensuremath{\PEfont P}}
\def\e{\ensuremath{\PEfont E}}
\newcommand{\E}{\e}
\renewcommand{\P}{\p}
\newcommand\bP{\ensuremath{\bs{\mathrm{P}}}}
\newcommand{\ind}{{\sf 1}}
\renewcommand{\epsilon}{\varepsilon}
\renewcommand{\rho}{\varrho}
\renewcommand{\phi}{\varphi}
\newcommand{\tr}{\mathrm{tr}}
\newenvironment{myenumerate}{%
\renewcommand{\theenumi}{\arabic{enumi}}%
\renewcommand{\labelenumi}{{\rm(\theenumi)}}%
\begin{list}{\labelenumi}
	{%
	\setlength{\itemsep}{0.4em}%
	\setlength{\topsep}{0.5em}%
	\setlength\leftmargin{2.45em}%
	\setlength\labelwidth{2.05em}%
	\setlength{\labelsep}{0.4em}%
	\usecounter{enumi}%
	}%
	}%
{\end{list}
}
\newenvironment{myitemize}{%
\begin{list}{$\bullet$}%
 	{%
	\setlength{\itemsep}{0.4em}%
	\setlength{\topsep}{0.5em}%
	\setlength\leftmargin{2.45em}%
	\setlength\labelwidth{2.05em}%
	\setlength{\labelsep}{0.4em}%
	}%
	}%
{\end{list}}
\def\dd{\mathrm{d}}
\def\Z{{\mathbb{Z}}}
\def\N{{\mathbb{N}}}
\def\Pinv{\mathcal{P}^{\rm{inv}}}
\def\Pinverg{\mathcal{P}^{\rm{inv,erg}}}
\def\Pinvfin{\mathcal{P}^{\rm{inv,fin}}}
\def\Pinvergfin{\mathcal{P}^{\rm{inv,erg,fin}}}
\def\Iann{I^ {\rm{ann}}}
\def\Ique{I^ {\rm{que}}}
\def\tildeE{\tilde{E}}
\renewcommand{\l}{\ell}
\begin{document}

\title{Large deviation principles for words 
drawn from correlated letter sequences}

\author{F.\ den Hollander}
\address{Mathematical Institute, Leiden University, P.O.\ Box 9512,
2300 RA Leiden, The Netherlands.}
\email{denholla@math.leidenuniv.nl}

\author{J.\ Poisat}
\address{Mathematical Institute, Leiden University, P.O.\ Box 9512,
2300 RA Leiden, The Netherlands.}
\email{poisatj@math.leidenuniv.nl}

\begin{abstract}
When an i.i.d.\ sequence of letters is cut into words according to i.i.d.\ 
renewal times, an i.i.d.\ sequence of words is obtained. In the \emph{annealed} 
LDP (large deviation principle) for the empirical process of words, the rate 
function is the specific relative entropy of the observed law of words w.r.t.\ 
the reference law of words. In Birkner, Greven and den Hollander~\cite{BGdH10} 
the \emph{quenched} LDP (= conditional on a typical letter sequence) was derived 
for the case where the renewal times have an \emph{algebraic} tail. The rate 
function turned out to be a sum of two terms, one being the annealed rate function, 
the other being proportional to the specific relative entropy of the observed law 
of letters w.r.t.\ the reference law of letters, obtained by concatenating the 
words and randomising the location of the origin. The proportionality constant 
equals the tail exponent of the renewal process. 

The purpose of the present paper is to extend both LDP's to letter sequences that 
are not i.i.d. It is shown that both LDP's carry over when the letter sequence 
satisfies a mixing condition called \emph{summable variation}. The rate functions 
are again given by specific relative entropies w.r.t.\ the reference law of words, 
respectively, letters. But since neither of these reference laws is i.i.d., several 
approximation arguments are needed to obtain the extension.

\vspace{0.5cm}
\noindent
\emph{MSC} 2010: 60F10, 60G10.\\
\emph{Key words:} Letters and words, renewal times, empirical process, annealed 
vs.\ quenched large deviation principle, rate function, specific relative entropy,
mixing.\\
{\it Acknowledgment:} FdH and JP were supported by ERC Advanced Grant 267356 VARIS.
\end{abstract}

\date{\today}

\maketitle


\section{Introduction and main results}
\label{S1}

\subsection{Notation}
\label{S1.1}

Let $E$ be a finite set of \emph{letters} and $\tilde{E} = \cup_{\ell\in\N} E^{\ell}$ the 
set of finite \emph{words} drawn from $E$. Both $E$ and $\tilde{E}$ are Polish spaces 
under the discrete topology. Write $E^\Z$ and $\tilde{E}^\Z$ for the sets of two-sided 
sequences of letters and words, endowed with the product topology, and let $\theta$ 
and $\tilde{\theta}$ denote the left-shifts acting on these sets, respectively. The set of 
probability laws on $E^\Z$ and $\tilde{E}^\Z$ that are shift-invariant, respectively, 
shift-invariant and ergodic w.r.t.\ $\theta$ and $\tilde{\theta}$ are denoted by $\Pinv(E^\Z)$ 
and $\Pinv(\tilde{E}^\Z)$, respectively, $\Pinverg(E^\Z)$ and $\Pinverg(\tilde{E}^\Z)$, 
and are endowed with the topology of weak convergence.

Let $X = (X_k)_{k\in\Z}$ be a two-sided \emph{random sequence of letters} sampled 
according to a shift-invariant probability distribution $\nu$ on $E^\Z$. Let $\tau
= (\tau_i)_{i\in\Z}$ be a two-sided i.i.d.\ sequence of \emph{renewal times} drawn 
from a common probability law $\varrho$ on $\N$, independent of $X$. The latter form 
a renewal process $T = (T_i)_{i\in\Z}$ given by
\begin{equation}
T_0=0, \qquad T_i = T_{i-1} + \tau_i, \quad i\in\Z. 
\end{equation}
Let $Y = (Y_i)_{i\in\Z}$ be the two-sided \emph{random sequence of words} cut out 
from $X$ according to $\tau$, i.e.,
\begin{equation}
\label{def:words}
Y_i = X_{(T_{i-1},T_i]} = (X_{T_{i-1}+1},\ldots,X_{T_i}), \qquad i\in\Z.
\end{equation} 
The joint law of $X$ and $\tau$ is denoted by $\P$. Write $|Y_i|$ to denote the
length of word $i$.

The reverse of cutting is glueing. The \emph{concatenation operator} $\kappa\colon\,
\tilde{E}^\Z \to E^\Z$ glues a word sequence into a letter sequence. In particular, 
$\kappa(Y) = X$. Given $Q\in\Pinv(\tilde{E}^\Z)$ with $m_Q=E_Q(|Y_1|)<\infty$, let
$\Psi_Q \in \Pinv(E^\Z)$ be defined by
\begin{equation}
\label{psiQdef}
\Psi_Q(A) = \frac{1}{m_Q} E_Q\left(\sum_{k=0}^{|Y_1|-1} 
1_{\{\theta^k\kappa(Y) \in A\}}\right), \qquad A \subset E^\Z,
\end{equation}
i.e., the law of $\kappa(Y)$ when $Y$ is drawn from $Q$, turned into a stationary 
law by randomizing the location of the origin. 

For $n\in\N$, let $(Y_{(0,n]})^{\rm{per}} \in \tilde{E}^\Z$ denote the 
$n$-periodized version of $Y$. We are interested in the \emph{empirical distribution 
of words}
\begin{equation}
\label{def:empword}
R_n = \frac{1}{n} \sum_{i=0}^{n-1} 
\delta_{\tilde{\theta}^i(Y_{(0,n]})^{\rm{per}}},
\end{equation}
both under $\P$ (= annealed law) and under $\P(\cdot \mid X)$ for $\nu$-a.a.\ $X$ 
(= quenched law). 


\subsection{Large deviation principles}
\label{S1.2}

If $\nu$ is i.i.d., then $\P$ is i.i.d.\ and the annealed LDP is standard, with the 
rate function given by the specific relative entropy of the observed law of words 
w.r.t.\ $\P$. The quenched LDP, however, is not standard. The quenched LDP was obtained 
in Birkner~\cite{B08} for the case where $\varrho$ has an exponentially bounded tail, 
and in Birkner, Greven and den Hollander~\cite{BGdH10} for the case where $\varrho$ 
has a polynomially decaying tail: 
\begin{equation}
\label{rhocond}
\lim_{ {m\to\infty} \atop {\rho(m)>0} } \frac{\log\rho(m)}{\log m} 
= -\alpha, \qquad \alpha \in [1,\infty).
\end{equation}
(No condition on the support of $\rho$ is needed other than that it is infinite.) In 
the latter case, the quenched rate function turns out to be a sum of two terms, one 
being the annealed rate function, the other being proportional to the specific relative 
entropy of the observed law of letters w.r.t.\ $\nu$, obtained by concatenating the 
words and randomising the location of the origin. The proportionality constant equals 
$\alpha-1$ times the average word length. 

The goal of the present paper is to extend both LDP's to the situation where $\nu$ is 
no longer i.i.d., but satisfies a mixing condition called \emph{summable variation},
which will be defined in Section~\ref{S3}. In what follows, $H(\cdot\mid\cdot)$ denotes
specific relative entropy (see Dembo and Zeitouni~\cite{DeZe98}, Section 6.5 for the
definition and key properties).

\begin{theorem}[{\bf Annealed LDP}]\label{annLDP}
If $\nu$ has summable variation, then the family of probability laws $\P(R_n \in \cdot\,)$, 
$n\in\N$, satisfies the LDP on $\Pinv(\tilde{E}^\Z)$ with rate $n$ and with rate function 
$\Iann\colon\,\Pinv(\tilde{E}^\Z) \mapsto [0,\infty]$ given by the specific relative entropy
\begin{equation}
\label{Iann}
\Iann(Q) = H(Q \mid \P).
\end{equation}
$\Iann$ is lower semi-continuous, has compact level sets, is affine, and has a unique zero 
at $Q=\P$.
\end{theorem}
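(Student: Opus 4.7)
The plan is to establish a process-level LDP for the stationary word-valued sequence $Y$ under $\P$, viewed as a law on $\tilde{E}^\Z$. Since the renewal gaps $\tau_i$ are i.i.d.\ and $\nu$ is shift-invariant, $\P$ is $\tilde{\theta}$-invariant, and the summable variation of $\nu$ combined with the renewal structure should yield a quantitative mixing property for $\P$ at the word level. As in~\cite{BGdH10}, the rate function will be the specific relative entropy $H(\cdot\mid\P)$, which is well-defined once one verifies, via a (near-)subadditivity argument, that $\tfrac{1}{n}H(Q^{(n)}\mid\P^{(n)})$ converges on $\Pinv(\tilde{E}^\Z)$. A preliminary step is thus to show that summable variation at the letter level, composed with the i.i.d.\ cutting, transfers into usable quasi-locality for the finite-dimensional marginals of $\P$ on $\tilde{E}^\Z$.

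For the upper bound I would follow the classical projective-limit strategy. Restrict first to $k$-cylinder observables on $\tilde{E}^\Z$ and apply exponential Chebyshev to $\langle R_n, f\rangle$ for bounded cylinder $f$, obtaining a Legendre dual $\Lambda_k^*$ as a partial rate function. Summable variation implies that the $k$-cylinder log-partition functions of $\P$ are subadditive up to an error that is summable in $k$, so that $\Lambda_k^*\uparrow H(\cdot\mid\P)$ as $k\to\infty$; the projective-limit (Dawson--G\"artner) theorem then delivers the upper bound on closed sets in the weak topology on $\Pinv(\tilde{E}^\Z)$.

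For the lower bound I would tilt by an arbitrary ergodic $Q$ with $H(Q\mid\P)<\infty$: replace $\P^{(n)}$ by $Q^{(n)}$ on the first $n$ word-coordinates, verify by the ergodic theorem under $Q$ that $R_n$ concentrates in any prescribed weak neighbourhood of $Q$, and estimate the Radon--Nikodym derivative via the Shannon--McMillan--Breiman theorem applied to $\P$, giving the exponential cost $e^{-nH(Q\mid\P)+o(n)}$. Extension to non-ergodic $Q$ then follows from the ergodic decomposition and affinity of $H(\cdot\mid\P)$. The main obstacle will be SMB itself: unlike the i.i.d.\ setting of~\cite{BGdH10}, the law $\P$ on $\tilde{E}^\Z$ is neither i.i.d.\ nor Markov, and the random word lengths can be arbitrarily large, so that a single long word can \emph{a priori} couple widely separated coordinates. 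I expect to handle this by approximating $\nu$ with finite-range (Markov) letter laws, for which the induced word law on $\tilde{E}^\Z$ is a hidden-Markov object with exponentially decaying correlations; summable variation provides quantitative control on the approximation error, and one passes to the limit using lower semi-continuity of specific relative entropy.

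The stated properties of $\Iann$ follow by standard arguments once the rate function is identified. Lower semi-continuity of $H(\cdot\mid\P)$ on $\Pinv(\tilde{E}^\Z)$ is general; compactness of level sets comes from the compactness of cylinder projections in the weak topology, combined with a uniform bound on the word-length marginal at finite height of the entropy (which uses only $E_\varrho[|Y_1|]<\infty$). Affinity is the usual affinity of specific relative entropy against a fixed stationary reference. Finally, $H(Q\mid\P)=0 \Leftrightarrow Q=\P$ relies on ergodicity of $\P$ under $\tilde{\theta}$, which is itself a consequence of the i.i.d.\ structure of $\tau$ together with the ergodicity of $\nu$ implied by summable variation.
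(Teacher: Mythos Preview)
Your outline is plausible but takes a substantially more laborious route than the paper. The paper's proof is essentially one line: it invokes the process-level LDP of Orey and Pelikan~\cite{OP88} (together with its non-compact-state-space version, Orey~\cite{O84}, Theorem~5.11), which requires only a \emph{ratio-mixing} condition (RM) and a \emph{continuous-dependence} condition (CD) on the word law $\P$. Both are verified in a few lines from (SV) via the elementary bound $\psi((m,n])\le\phi(0,\infty)\le\sum_{k\ge 0}\phi(k)<\infty$ obtained from Lemmas~\ref{l:fromphitopsi}--\ref{l:telescoping}. By contrast, you propose to rebuild the LDP from scratch (projective-limit upper bound, change-of-measure lower bound via SMB, Markov approximation of $\nu$ to handle the non-Markovian word process). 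This is essentially what Orey--Pelikan already do under the hood; your route would be more self-contained but forfeits the main economy of the argument. The key point you are missing is that (SV) yields \emph{uniform} ratio-mixing of $\P$ on all of $\cG_{(0,\infty)}$ in one stroke, so no finite-range approximation of $\nu$, no separate SMB analysis under the tilted measure, and no hidden-Markov study of $Y$ are needed.

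One small correction: you invoke $E_\varrho[|Y_1|]<\infty$ to get compactness of the level sets, but this is not assumed (the tail condition~\eqref{rhocond} allows $\alpha=1$, and in any case \eqref{rhocond} plays no role in the annealed LDP). In the paper, compactness of level sets comes with the Orey--Pelikan package; the exponential tightness needed for the non-compact alphabet $\tilde{E}$ is handled in Orey~\cite{O84} by truncating to $\tilde{E}_{(\ell_n)}=\bigcup_{1\le k\le \ell_n}E^k$ with $\ell_n$ chosen so that $\P(\tau_1>\ell_n)\le 2^{-n}$, which needs only that $\varrho$ is a probability law on $\N$, not a moment bound.
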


\begin{theorem}[{\bf Quenched LDP}]\label{queLDP}
If $\nu$ has summable variation, then for $\nu$-a.a.\ $X$ the family of conditional 
probability laws $\P(R_n \in \cdot\,\mid X)$, $n\in\N$, satisfies the LDP on $\Pinv
(\tilde{E}^\Z)$ with rate $n$ and with rate function $\Ique\colon\,\Pinv(\tilde{E}^\Z) 
\mapsto [0,\infty]$ given by the sum of specific relative entropies
\begin{equation}
\label{Ique}
\Ique(Q) = H(Q \mid \P) + (\alpha-1) m_Q H(\Psi_Q \mid \nu).
\end{equation}
$\Ique$ is lower semi-continuous, has compact level sets, is affine, and has a unique zero 
at $Q=\P$.
\end{theorem}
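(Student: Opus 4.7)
The plan is to follow the strategy of~\cite{BGdH10}, where the same theorem was established under the stronger hypothesis that $\nu$ is i.i.d., and to supplement it with approximation arguments that exploit the summable variation assumption. The proof splits into the upper bound $\limsup_n \tfrac{1}{n} \log \P(R_n \in C \mid X) \leq -\inf_C \Ique$ for closed $C \subset \Pinv(\tilde{E}^\Z)$, and the matching lower bound for open sets, valid for $\nu$-a.a.\ $X$. The annealed LDP (Theorem~\ref{annLDP}) and the standard properties of specific relative entropy will be used freely throughout.

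For the upper bound I would decompose
\begin{equation*}
\P(R_n \in \cdot \mid X) \;=\; \sum_T \P(\tau = T)\, \mathbf{1}\{R_n(X,T) \in \cdot\}
\end{equation*}
and note that if $R_n(X,T) \approx Q$ then the letters of $X$ at the positions traversed by $T$ must carry an empirical distribution close to $\Psi_Q$. Under summable variation, $\nu$ is a Gibbs measure in the Bowen--Walters sense, so the thermodynamic formalism supplies a level-3 LDP with rate $H(\cdot \mid \nu)$; in particular, the $\nu$-probability that $X$ exhibits the pattern $\Psi_Q$ on a positive-density subset of positions decays as $\exp(-n m_Q H(\Psi_Q \mid \nu) + o(n))$. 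Combining this with the annealed upper bound (contributing $H(Q \mid \P)$) and with the entropic cost of the renewal configurations that realize the word-length marginal of $Q$ under~(\ref{rhocond}) (this is where the factor $\alpha - 1$ enters, exactly as in~\cite{BGdH10}) yields the upper bound $\Ique(Q)$.

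For the lower bound I would construct a near-optimal strategy. Given $Q$ with $\Ique(Q) < \infty$, the level-3 LDP for $\nu$ guarantees that a typical $X$ contains, with $\nu$-probability at least $\exp(-n m_Q H(\Psi_Q \mid \nu) - o(n))$, long patches whose empirical letter distribution approximates $\Psi_Q$. I then reposition the renewal times so that, after concentrating on these patches, the induced word sequence approximates $Q$, paying the annealed rate $H(Q \mid \P)$ per word. The prefactor $\alpha - 1$ appears upon optimizing the lengths and densities of the patches against the polynomial tail (\ref{rhocond}) of the renewal times, in the same way as in the i.i.d.\ argument.

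The main obstacle is that the i.i.d.\ proof in~\cite{BGdH10} uses the product structure of $\nu$ in several places, most notably for combinatorial estimates on letter configurations and for identifying the letter-marginal contribution to the annealed rate with $H(\cdot \mid \nu)$. To circumvent this I plan to approximate $\nu$ by its $m$-block Markov truncations $\nu_m$, invoke~\cite{BGdH10} for each $\nu_m$ after a Radon--Nikodym conjugation against an i.i.d.\ reference of the same single-letter marginal, and pass to the limit $m \to \infty$. The summable variation hypothesis is precisely what guarantees that $H(\Psi_Q \mid \nu_m) \to H(\Psi_Q \mid \nu)$ uniformly on level sets of $\Ique$, so that the rate functions converge and the LDP transfers. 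The stated qualitative properties of $\Ique$ (lower semicontinuity, compact level sets, affinity, unique zero at $Q = \P$) then follow from the corresponding properties of the specific relative entropy, the affine dependence $Q \mapsto m_Q \Psi_Q$, and the identity $\Psi_\P = \nu$.
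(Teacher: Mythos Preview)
Your plan differs from the paper's and contains a real gap. The core of your strategy (fourth paragraph) is to approximate $\nu$ by its $m$-step Markov truncations $\nu_m$, invoke \cite{BGdH10} for each $\nu_m$, and let $m\to\infty$. But \cite{BGdH10} is proved only for i.i.d.\ $\nu$, so it cannot be invoked for $\nu_m$; the ``Radon--Nikodym conjugation against an i.i.d.\ reference'' does not repair this, since the density $d\nu_m|_n/d\mu|_n$ grows like $e^{cn}$ and would shift the rate function rather than leave it invariant. More seriously, the quenched statement concerns $\nu$-a.a.\ $X$, whereas your scheme would at best yield a statement for $\nu_m$-a.a.\ $X$. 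In general $\nu_m$ and $\nu$ are mutually singular on $E^{\Z}$, so the almost-sure sets are unrelated, and you give no mechanism for transferring the conclusion across. The asserted uniform convergence $H(\Psi_Q\mid\nu_m)\to H(\Psi_Q\mid\nu)$ on level sets is likewise left unproved.

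The paper avoids any approximation of $\nu$. Instead it goes through the proof of \cite{BGdH10} line by line and replaces each use of the product structure of $\nu$ by a decoupling inequality that follows directly from (SV): conditional probabilities given different pasts differ by at most a factor $C(\phi)$ (Lemma~\ref{lem:changingpast}), and for events on disjoint blocks $\nu(\cap_k A_k)\le C(\phi)^{m-1}\prod_k\nu(A_k)$ (Lemma~\ref{lem:decoupling}). These are enough to push through the combinatorial estimate on pattern occurrences in the upper bound (the analogue of \cite[Eq.~(3.39)]{BGdH10}), the control of recurrence times $\sigma_\ell-\sigma_{\ell-1}$ in the lower bound (Lemma~\ref{lem:rectimes}), and the identification $\tfrac1n\log\nu(X_{(0,T_n]})\to m_Q E_{\Psi_Q}[\log\nu_{X_{(-\infty,0]}}(X_1)]$ (Lemma~\ref{lem:asymptEntropy}). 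Your paragraphs~2--3 gesture at the right phenomena but do not isolate these replacements, which are the actual content of the extension; in particular the appeal to a ``level-3 LDP for $\nu$'' does not by itself yield the pattern-counting bound that drives the quenched upper bound.
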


\begin{theorem}
\label{LDPPolish}
Both LDPs remain valid when $E$ is a Polish space.   
\end{theorem}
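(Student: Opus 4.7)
The plan is to reduce the Polish case to the finite-alphabet case via a projective-limit argument and then to invoke the Dawson--Gärtner theorem (see Dembo and Zeitouni~\cite{DeZe98}, Theorem~4.6.1). The input to the projective system will be the finite-alphabet LDPs from Theorems~\ref{annLDP} and~\ref{queLDP}, applied to coarse-grainings of $E$.

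Fix a refining sequence $(\cA_k)_{k\in\N}$ of finite measurable partitions of $E$ that generates the Borel $\sigma$-algebra. Let $\pi_k\colon E\to E_k$ denote the projection onto the finite quotient $E_k$, and extend $\pi_k$ coordinate-wise to $E^\Z\to E_k^\Z$, word-wise to $\tilde{E}^\Z\to\tilde{E}_k^\Z$, and by pushforward to a map $\pi_k^*\colon\Pinv(\tilde{E}^\Z)\to\Pinv(\tilde{E}_k^\Z)$. Provided the summable-variation property is inherited by the projected letter law $\nu_k=\pi_k^*\nu$ (which follows because the variation sums are monotone under coarsening of the generating $\sigma$-algebra), Theorems~\ref{annLDP} and~\ref{queLDP} apply to the projected empirical word distribution $R_n^{(k)}=\pi_k^* R_n$, yielding LDPs on $\Pinv(\tilde{E}_k^\Z)$ with rate functions
\begin{equation*}
I_k^{\mathrm{ann}}(Q_k)=H(Q_k\mid\P_k),\qquad
I_k^{\mathrm{que}}(Q_k)=H(Q_k\mid\P_k)+(\alpha-1)\,m_{Q_k}\,H(\Psi_{Q_k}\mid\nu_k),
\end{equation*}
where $\P_k=\pi_k^*\P$. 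For the quenched statement the $\nu$-null exceptional set depends on $k$, but a countable intersection of full-measure sets is still of full measure, so a single full-measure set works simultaneously for all $k$. Moreover, since $R_n^{(k)}$ depends on $X$ only through $\pi_k X$, the law $\P(R_n^{(k)}\in\cdot\mid X)$ coincides with the quenched projected law $\P_k(R_n^{(k)}\in\cdot\mid\pi_k X)$, matching the hypotheses of Theorem~\ref{queLDP}.

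The space $\Pinv(\tilde{E}^\Z)$ endowed with the weak topology is homeomorphic to the projective limit of the spaces $\Pinv(\tilde{E}_k^\Z)$ under the family $\{\pi_k^*\}$, since the partitions $\cA_k$ generate the Borel $\sigma$-algebra on $E$ and hence the cylinder algebra on $\tilde{E}^\Z$. The Dawson--Gärtner theorem then yields LDPs for $\P(R_n\in\cdot\,)$ and, $\nu$-a.s., for $\P(R_n\in\cdot\mid X)$ on $\Pinv(\tilde{E}^\Z)$, with rate functions
\begin{equation*}
I^{\mathrm{ann}}(Q)=\sup_{k\in\N}I_k^{\mathrm{ann}}(\pi_k^* Q),\qquad
I^{\mathrm{que}}(Q)=\sup_{k\in\N}I_k^{\mathrm{que}}(\pi_k^* Q).
\end{equation*}
The annealed identification $\sup_k H(\pi_k^* Q\mid \pi_k^*\P)=H(Q\mid\P)$ is a standard variational characterization of specific relative entropy along a refining generator.

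The main obstacle will be the quenched identification. One first verifies that concatenation commutes with letter-projection, from which $\Psi_{\pi_k^* Q}=\pi_k^*\Psi_Q$ and $m_{\pi_k^* Q}=m_Q$ follow; this uses only the definition~\eqref{psiQdef} and the fact that $\pi_k$ acts letter-by-letter on $\kappa(Y)$. The second term can then be written as $(\alpha-1)\,m_Q\,H(\pi_k^*\Psi_Q\mid\pi_k^*\nu)$, and the same monotone convergence of specific relative entropy along a refining generator produces the supremum $(\alpha-1)\,m_Q\,H(\Psi_Q\mid\nu)$ in the limit. The qualitative properties of $\Iann$ and $\Ique$ (lower semicontinuity, compactness of level sets, affinity, unique zero at $Q=\P$) are then inherited from the finite case via the supremum representation and the general properties of Dawson--Gärtner limits.
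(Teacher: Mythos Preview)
Your overall strategy --- coarse-grain to finite alphabets, apply the finite-alphabet LDPs, and lift via Dawson--G\"artner --- is the same as the paper's for the quenched statement. But there is a genuine gap at the point where you write ``the summable-variation property is inherited by the projected letter law $\nu_k$ (which follows because the variation sums are monotone under coarsening of the generating $\sigma$-algebra).'' This is not correct. The variation coefficient $\phi(n)$ in Definition~\ref{def:mixingcoeff} involves two nested suprema, and they move in opposite directions under coarse-graining: the outer supremum is over pairs of pasts that agree on $(-n,0]$, and coarsening \emph{enlarges} this set (two fine pasts whose projections agree need not agree themselves), while the inner supremum is over events $A\in\cF_1$ and shrinks. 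More fundamentally, the conditional law $\nu^{(c)}_{i^-}$ of the coarse-grained process given a coarse-grained past $i^-$ is an \emph{average} of the fine conditional laws $\nu_{x^-}$ over all $x^-$ projecting to $i^-$, and ratios of averages are not controlled by ratios of the individual terms. The paper flags exactly this issue at the start of Section~\ref{S5} and circumvents it not by proving (SV) for $\nu^{(c)}$, but by deriving the needed decoupling inequalities for $\nu^{(c)}$ directly from (SV) for $\nu$ (Lemmas~\ref{lem:changingpastPolish1}--\ref{lem:changingpastPolish2}). Without this, you cannot invoke Theorems~\ref{annLDP}--\ref{queLDP} for the projected process.

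Two smaller points. First, for the annealed LDP the paper does not use the projective-limit route at all: once Lemmas~\ref{l:fromphitopsi}--\ref{l:telescoping} are seen to hold for Polish $E$ (via Lemma~\ref{lem:changingpastPolish1}), Orey--Pelikan applies directly. Your Dawson--G\"artner argument for the annealed case would also work, but only after the decoupling issue above is repaired. Second, in the quenched case the paper has to handle separately those $Q$ for which $\Psi_Q$ fails to be conditionally locally absolutely continuous with respect to $\nu^{(c)}$ (Step~4 of Section~\ref{S5}); Lemma~\ref{lem:asymptEntropy} does not carry over for such $Q$, and an annealed estimate is needed to close the upper bound. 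Your proposal does not address this.
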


\noindent
{\bf Remark:} 
If $m_Q=\infty$, then the second term in \eqref{Ique} is defined to be $\alpha-1$ times 
the \emph{truncation limit} $\lim_{\tr\to\infty} m_{[Q]_\tr}H(\Psi_{[Q]_\tr} \mid \nu)$, 
where $\tr$ is the operator that truncates all the words to length $\leq \tr$. Moreover, for all
$Q\in\Pinvfin(\tildeE^\bbZ)=\{\Pinv(\widetilde{E}^\Z)\colon\,m_Q<\infty\}$,
\begin{equation}\label{eq:tr.limit}
\lim_{\tr\to\infty} H([Q]_\tr \mid \P) = H(Q \mid \P),
\qquad  \lim_{\tr\to\infty} m_{[Q]_\tr} H(\Psi_{[Q]_\tr} \mid \nu) = m_Q H(\Psi_Q \mid \nu).\\
\end{equation}
See Birkner,
Greven and den Hollander~\cite{BGdH10} for details.

\medskip\noindent
{\bf Remark:}
Both rate functions are the same as for the i.i.d.\ case, even though the reference laws 
$P$ and $\nu$ are no longer i.i.d. This lack of independence will require us to go through 
several approximation arguments. Both LDP's can be applied to the problem of pinning of a 
polymer chain at an interface carrying correlated disorder. This application, which is
our main motivation for extending the LDP's, will be discussed in a future paper.


\subsection{Outline}
\label{S1.3}

In Section~\ref{S2} we collect some basic facts, introduce the relevant mixing coefficients, 
and define summable variation. We give examples where this mixing condition holds, 
respectively, fails. In Section~\ref{S3} we prove the annealed LDP by applying a result 
from Orey and Pelikan~\cite{OP88}. In Section~\ref{S4} we prove the quenched LDP by going 
over the proof in Birkner, Greven and den Hollander~\cite{BGdH10} for i.i.d.\ letter 
sequences and checking which parts have to be adapted. In Section~\ref{S5} we extend the 
LDP's from finite $E$ to Polish $E$ by using the Dawson-G\"artner projective limit LDP.


\section{Basic facts, mixing coefficients and summable variation}
\label{S2}


\subsection{Basic facts}
\label{S2.1}

Throughout the paper we abbreviate
\begin{equation}
X_{(m,n]} = (X_{m+1},\ldots,X_n), \quad Y_{(m,n]} = (Y_{m+1},\ldots,Y_n),
\qquad -\infty \leq m \leq n \leq \infty.
\end{equation}
The associated sigma-algebra's are written as
\begin{equation}
\cF_{(m,n]} = \sigma(X_{(m,n]}),
\qquad
\cG_{(m,n]} = \sigma(Y_{(m,n]}).
\end{equation}
Write $\N_0 = \N \cup \{0\}$. Let $(\nu_{x^-}(\cdot); x^- \in E^{-\N_0})$ be a 
regular version of $\nu(\cdot\mid X_{(-\infty,0]})$ (see Parthasarathy~\cite[Theorem 8.1]{P67}), 
i.e., 
\begin{equation}
\nu(A) = \int_{x^- \in E^{-\N_0}} \nu_{x^-}(A)\,\dd\nu(x^-), \qquad  
A \in \cF_{(0,\infty)}. 
\end{equation}
Since $X$ is no longer i.i.d., the distribution of a word in $Y$ depends on the outcome 
of all the previous words. However, since the word lengths are still i.i.d., when we 
condition on the past of the word sequence only the past of the letter sequence is 
relevant. This allows us to obtain a regular version of the conditional probabilities 
of $\P$ as follows.

\begin{lemma}
The collection $(\P_{y^-}(\cdot),y^-\in\tilde{E}^{-\N_0})$ of probability laws on 
$\tilde{E}^{\N}$ defined by
\begin{equation}
\label{eq:cond_prob_words}
\P_{y^-}(A) = \int_{E^\Z} \P(A \mid \cF_\Z)\,\dd\nu_{\kappa(y^-)}
\qquad \forall\, A\in \cG_{(0,\infty)},
\end{equation}
constitute a regular version of the conditional probability $\P(\cdot \mid \cG_{(-\infty,0]})$.
\end{lemma}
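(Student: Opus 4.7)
I verify directly the three defining properties of a regular version of $\P(\cdot\mid \cG_{(-\infty,0]})$: (i) measurability of $y^-\mapsto \P_{y^-}(A)$ for each $A\in\cG_{(0,\infty)}$; (ii) $\P_{y^-}$ is a probability law on $\tilde E^{\N}$ for every $y^-$; and (iii) for all $B\in\cG_{(-\infty,0]}$ and $A\in\cG_{(0,\infty)}$,
\begin{equation}\label{eq:pl.main}
\int_B \P_{y^-}(A)\,\dd\P(y^-) \;=\; \P\bigl(Y_{(-\infty,0]}\in B,\ Y_{(0,\infty)}\in A\bigr).
\end{equation}
Points (i) and (ii) are immediate: the concatenation map $y^-\mapsto\kappa(y^-)$ is measurable, $x^-\mapsto\nu_{x^-}$ is a measurable probability kernel, and $\P(\cdot\mid\cF_\Z)$ is a probability kernel, so \eqref{eq:cond_prob_words} defines a measurable probability kernel on $\tilde E^{\N}$.

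\textbf{Main step.} Two structural observations drive \eqref{eq:pl.main}. First, since $|Y_i|=\tau_i$ for $i\leq 0$ recovers $\tau_{(-\infty,0]}$ while $\kappa(Y_{(-\infty,0]})=X_{(-\infty,0]}$ recovers the past letters, the sigma-algebra $\cG_{(-\infty,0]}$ coincides with $\sigma(X_{(-\infty,0]},\tau_{(-\infty,0]})$. Second, any $A\in\cG_{(0,\infty)}$ is a measurable function of $(X_{(0,\infty)},\tau_{(0,\infty)})$, so by the global independence $\tau\perp X$ the conditional probability $\P(A\mid\cF_\Z)$ depends on $X$ only through $X_{(0,\infty)}$; write $\P(A\mid\cF_\Z)=g(X_{(0,\infty)})$. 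The defining property of the regular version $\nu_{x^-}$ then yields
\begin{equation}\label{eq:pl.cond}
\P_{y^-}(A) \;=\; \int g(x_{(0,\infty)})\,\dd\nu_{\kappa(y^-)}(x) \;=\; \E\bigl[g(X_{(0,\infty)})\,\big|\,X_{(-\infty,0]}=\kappa(y^-)\bigr].
\end{equation}

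\textbf{Chaining to \eqref{eq:pl.main}.} Inserting \eqref{eq:pl.cond} and applying the tower property, the left-hand side of \eqref{eq:pl.main} becomes $\E\bigl[1_B(Y_{(-\infty,0]})\,\E[g(X_{(0,\infty)})\mid X_{(-\infty,0]}]\bigr]$. Since $\tau\perp X$, the indicator $1_B(Y_{(-\infty,0]})$ is, conditionally on $X_{(-\infty,0]}$, a function of $\tau_{(-\infty,0]}$ alone and hence independent of $X_{(0,\infty)}$; this collapses the expression to $\E[1_B(Y_{(-\infty,0]})\,g(X_{(0,\infty)})]=\E[1_B(Y_{(-\infty,0]})\,\P(A\mid\cF_\Z)]$. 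Finally, conditioning on $\cF_\Z$ and using $\tau_{(-\infty,0]}\perp\tau_{(0,\infty)}$ (so that $Y_{(-\infty,0]}$ and $Y_{(0,\infty)}$ are conditionally independent given $X$), one obtains $\E[1_B(Y_{(-\infty,0]})\,1_A(Y_{(0,\infty)})]$, which is exactly \eqref{eq:pl.main}. There is no deep obstacle — the argument is essentially careful bookkeeping with the two independences $\tau\perp X$ and $\tau_{(-\infty,0]}\perp\tau_{(0,\infty)}$; the key conceptual point, already encoded in \eqref{eq:cond_prob_words}, is that the conditional law of future words given $\cG_{(-\infty,0]}$ depends on the past only through the past letters $\kappa(y^-)$, not through the past word lengths.
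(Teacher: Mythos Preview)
Your proof is correct, and in fact more complete than the paper's own argument. The paper proceeds by the monotone class theorem: it reduces to finite cylinder sets $A=\bigcap_{1\le i\le r}\{Y_i=y_i\}$, computes $\P(A\mid\cF_\Z)=\ind_{\{X\in\kappa(A)\}}\prod_i\rho(|y_i|)$ explicitly, integrates against $\nu_{\kappa(y^-)}$, and then averages over $y^-$ using $\int \nu_{\kappa(y^-)}(\cdot)\,\dd\P(y^-)=\nu(\cdot)$. However, the paper only verifies the marginal identity $\int \P_{y^-}(A)\,\dd\P(y^-)=\P(A)$, not the full conditioning identity with an arbitrary $B\in\cG_{(-\infty,0]}$; the extension to general $B$ is left implicit. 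Your approach instead works at the level of abstract conditional independence: you identify $\P(A\mid\cF_\Z)=g(X_{(0,\infty)})$, recognise $\P_{y^-}(A)$ as $\E[g(X_{(0,\infty)})\mid X_{(-\infty,0]}=\kappa(y^-)]$, and then chain through two conditional-independence facts (past words $\perp$ future letters given past letters; past words $\perp$ future words given all letters) to obtain \eqref{eq:pl.main} directly for every $B$. This avoids both the cylinder reduction and the explicit product formula for $\P(A\mid\cF_\Z)$, and it makes transparent the conceptual point you highlight at the end: the dependence on the past factors through $\kappa(y^-)$ alone. The paper's route is more hands-on and perhaps easier to follow line by line; yours is cleaner, handles the $B$-part explicitly, and would generalise more readily if the word-length law were not i.i.d.
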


\begin{proof}
For every $y^-\in\tilde{E}^{-\N_0}$, $\P_{y^-}(\cdot)$ defined in \eqref{eq:cond_prob_words} 
is a probability measure. We must show that
\begin{equation}
 \int_{\tilde{E}^{-\bbN_0}} \P_{y^-}(\cdot)\,\dd\P(y^-) = \P(\cdot).
\end{equation}
By the monotone class theorem, it is enough to prove the claim for finite cylinder sets. Fix 
$r\in\N$, $(y_i)_{1\leq i \leq r} \in \tilde{E}^r$ and pick $A = \bigcap_{1 \leq i \leq r} \{Y_i=y_i\}$. 
Then
\begin{equation}
\begin{aligned}
\int_{E^\Z} \P(A \mid \cF_\Z)\,\dd\nu_{\kappa(y^-)} 
&= \int_{E^\Z} \dd\nu_{\kappa(y^-)}\,\ind_{\{X\in\kappa(A)\}}\,
\prod_{i=1}^r \rho(|y_i|)\\ 
&= \nu_{\kappa(y^-)}(X\in\kappa(A))\,\prod_{i=1}^r \rho(|y_i|),
\end{aligned}
\end{equation}
where $\kappa(A)$ is the concatenation of $A$. Since $\int_{\widetilde{E}^{-\N_0}} \dd\P(y^-)\,
\nu_{\kappa(y^-)}(\cdot) = \int_{E^{-\N_0}} \dd\nu(x^-)\,\nu_{x^-}(\cdot) = \nu(\cdot)$, we have
\begin{equation}
\int_{\tilde{E}^{-\N_0}} \dd\P(y^-)\,\int_{E^\Z} \P(A \mid \cF_\Z)\,\dd\nu_{\kappa(y^-)} 
= \nu(X\in\kappa(A))\,\prod_{i=1}^r \rho(|y_i|) = \P(A),
\end{equation}
which proves the claim.
\end{proof}


\subsection{Mixing coefficients}
\label{S2.2}

We need the following mixing coefficients for letters and words:

\begin{definition}
\label{def:mixingcoeff}
(a) For $\Lambda_1 \subset -\N_0$ and $\Lambda_2 \subset \N$, let
\begin{equation}
\label{eq:phidef}
\phi(\Lambda_1,\Lambda_2) 
= \sup_{ {x^-,\hat{x}^- \in E^{-\N_0}} \atop {(x^-)_{\Lambda_1} = (\hat{x}^-)_{\Lambda_1}} } 
\sup_{ {A\in\cF_{\Lambda_2}:} \atop {\nu_{x^-}(A)>0} } \left|\log \nu_{x^-}(A) - \log \nu_{\hat{x}^-}(A)\right|.
\end{equation}
(b) For $\Lambda \subset \N$, let
\begin{equation}
\label{eq:psidef}
\psi(\Lambda) 
= \sup_{y^-,\hat{y}^- \in \tilde{E}^{-\N_0}} 
\sup_{ {A \in \cG_{\Lambda}} \atop {P_{y^-}(A)>0} } 
\left|\log \P_{y^-}(A) - \log \P_{\hat{y}^-}(A)\right|.
\end{equation}
\end{definition}

\noindent
The restrictions $\nu_{x^-}(A)>0$ and $\P_{\hat{y}^-}(A)>0$ are put in to avoid $\infty - \infty$. Nonetheless, (\ref{eq:phidef}) and (\ref{eq:psidef}) may be infinite. Note that if $\Lambda_1 = \emptyset$, then the supremum in Definition~\ref{def:mixingcoeff}(a) 
is taken over all $x^-,\hat{x}^- \in E^{-\N_0}$ without any restriction ($(x^-)_\Lambda$ 
denotes the restriction of $x^-$ to $\Lambda$). We will use the following abbreviations:
\begin{equation}
\phi(k,\cdot) = \phi((-k,0],\cdot), \quad k\in\N, 
\qquad \phi(0,\cdot) = \phi(\emptyset,\cdot), 
\qquad \phi(\cdot,\ell) = \phi(\cdot,(0,\ell]), \quad \ell\in\N.
\end{equation}

\begin{lemma}
\label{l:fromphitopsi}
Let $0 \leq m < n$, $y_{(m,n]}\in \tilde{E}^{n-m}$ and $A = \{Y_{(m,n]} = y_{(m,n]}\}$. 
For all $y^{-},\hat{y}^- \in \tilde{E}^{-\N_0}$,
\begin{equation}
\P_{y^{-}}(A) \leq \E\left[ \exp\left\{\phi\Big(0,\Big(T_m,
T_m + \sum_{k=m+1}^n |y_k|\Big]\Big)\right\} \P_{\hat{y}^-}(A \mid T_m)\right].
\end{equation}
\end{lemma}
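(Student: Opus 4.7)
The strategy is to condition on $T_m$ in order to separate the renewal randomness from the letter randomness, and then apply the defining bound of the letter-mixing coefficient $\phi(0,\cdot)$ to switch the conditioning past from $\kappa(y^-)$ to $\kappa(\hat{y}^-)$.

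First I would compute the conditional probability explicitly. Under $\P_{y^-}$, the letter past is fixed to $X_{(-\infty,0]} = \kappa(y^-)$, the letter future is distributed according to $\nu_{\kappa(y^-)}$, and the sequence $\tau$ is i.i.d.\ with law $\rho$ and independent of $X$. Writing
\[
A = \bigcap_{k=m+1}^n \bigl(\{\tau_k = |y_k|\} \cap \{X_{(T_{k-1},T_k]} = y_k\}\bigr)
\]
and noting that once $T_m$ and $\tau_{m+1},\dots,\tau_n$ are fixed the values $T_{m+1},\dots,T_n$ are determined, the independence of $\tau_{(m,n]}$ from $(T_m,X)$ gives, with $L = \sum_{k=m+1}^n |y_k|$ and $w = y_{m+1}\cdots y_n$ the concatenation,
\[
\P_{y^-}(A\mid T_m) = \Bigl(\prod_{k=m+1}^n \rho(|y_k|)\Bigr)\,\nu_{\kappa(y^-)}\bigl(X_{(T_m, T_m+L]} = w\bigr),
\]
and the analogous identity for $\hat{y}^-$ with $\nu_{\kappa(\hat{y}^-)}$ in place of $\nu_{\kappa(y^-)}$.

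Second, for each realisation of $T_m$ I would apply the definition of $\phi(\emptyset,\Lambda_2)$ from Definition~\ref{def:mixingcoeff}(a) with $\Lambda_2 = (T_m, T_m+L]$. Since $\Lambda_1 = \emptyset$ imposes no coincidence of the two pasts, taking $x^- = \kappa(y^-)$ and $\hat{x}^- = \kappa(\hat{y}^-)$ gives
\[
\nu_{\kappa(y^-)}\bigl(X_{(T_m, T_m+L]} = w\bigr) \leq e^{\phi(0,(T_m,T_m+L])}\,\nu_{\kappa(\hat{y}^-)}\bigl(X_{(T_m,T_m+L]} = w\bigr),
\]
trivially valid when the left-hand side vanishes or $\phi$ is infinite. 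Multiplying through by $\prod_{k=m+1}^n \rho(|y_k|)$ and recognising the right factor as $\P_{\hat{y}^-}(A\mid T_m)$ yields a pathwise inequality in $T_m$; integrating against the (common) law of $T_m$ under $\P$ produces the stated bound.

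The main obstacle is the clean algebraic decomposition in the first step: one has to argue that conditioning simultaneously on the word past $\cG_{(-\infty,0]}$ (via $y^-$) and on the renewal value $T_m$ leaves the future letter distribution equal to $\nu_{\kappa(y^-)}$. This uses crucially the independence of $\tau$ and $X$ together with the structure of the regular conditional probability $\P_{y^-}$ constructed in the preceding lemma. Once this factorisation is in place, the comparison via $\phi(0,\cdot)$ and the final averaging over $T_m$ are essentially bookkeeping.
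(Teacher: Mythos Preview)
Your proposal is correct and follows essentially the same approach as the paper: decompose $\P_{y^-}(A)$ as an expectation over $T_m$ of the product of the $\rho$-factors and $\nu_{\kappa(y^-)}(X_{(T_m,T_m+L]}=w)$, apply the defining bound of $\phi(0,(T_m,T_m+L])$ to replace $\kappa(y^-)$ by $\kappa(\hat{y}^-)$, and recognise the resulting integrand as $e^{\phi(\cdot)}\P_{\hat{y}^-}(A\mid T_m)$. The only cosmetic difference is that the paper writes the full expectation from the start rather than first computing $\P_{y^-}(A\mid T_m)$ and then integrating.
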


\begin{proof} 
Using Definition~\ref{def:mixingcoeff}(a), we have
\begin{equation}
\begin{aligned}
&\P_{y^{-}}(A) = \E\left[\nu_{\kappa(y^-)}\Big(X_{\big(T_m,T_m+\sum_{k=m+1}^n|y_k|\big]}
=\kappa (y_{(m,n]})\Big) \prod_{k=m+1}^n \rho(|y_k|)\right]\\
&\leq \E\left[ \exp\left\{\phi\left(0,\Big(T_m,T_m + \sum_{k=m+1}^n |y_k|\Big]
\right)\right\} \nu_{\kappa(\hat{y}^-)}\Big(X_{\big(T_m,T_m+\sum_{k=m+1}^n|y_k|\big]}
=\kappa(y_{(m,n]})\Big)\right.\\
&\hspace{12cm} \left.\times  \prod_{k=m+1}^n \rho(|y_k|)\right]\\
&= \E\left[ \exp\left\{\phi\left(0,\Big(T_m,T_m + \sum_{k=m+1}^n |y_k|\Big]\right)\right\} 
\P_{\hat{y}^-}(A \mid T_m)\right].
\end{aligned}
\end{equation}
\end{proof}

\begin{lemma}
\label{l:telescoping} 
For all $k\in\N_0,\ell\in\N$,
\begin{equation}
\phi(k,\ell) \leq \sum_{m=0}^{\ell-1} \phi(k+m),
\end{equation}
where $\phi(k) = \phi(k,1)$, $k\in\N_0$. 
\end{lemma}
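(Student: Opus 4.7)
The plan is to telescope along single letters, using the chain rule for conditional probabilities together with the shift-invariance of $\nu$. Since $E$ is finite, every event in $\cF_{(0,\ell]}$ is a finite disjoint union of cylinders $\{X_{(0,\ell]}=(a_1,\ldots,a_\ell)\}$, so it suffices to prove the bound when $A$ is such a cylinder: a uniform two-sided multiplicative bound on the ratio $\nu_{x^-}(A_i)/\nu_{\hat{x}^-}(A_i)$ over cylinders $A_i$ in the decomposition transfers to the disjoint union by summing numerator and denominator separately. We may moreover assume $\sum_{m=0}^{\ell-1}\phi(k+m)<\infty$, since otherwise the bound is trivial; under this assumption, $\nu_{x^-}$ and $\nu_{\hat{x}^-}$ (for $x^-,\hat{x}^-$ agreeing on $(-k,0]$) assign positive mass to the same single-letter cylinders, hence to the same finite-dimensional cylinders.

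Fix then a cylinder $A=\{X_{(0,\ell]}=(a_1,\ldots,a_\ell)\}$ with $\nu_{x^-}(A)>0$. For $m=0,1,\ldots,\ell-1$ define the shifted past $x^{-,m}\in E^{-\N_0}$ by
\begin{equation}
x^{-,m}_j=\begin{cases} a_{m+j}, & -m+1\leq j\leq 0,\\ x^-_{j+m}, & j\leq -m,\end{cases}
\end{equation}
so that $x^{-,0}=x^-$. The tower property for regular conditional probabilities, combined with the shift-invariance of $\nu$ (which identifies the conditional law of $X_{m+1}$ given $X_{(-\infty,m]}$ with $\nu_{\,\cdot\,}(X_1\in\cdot\,)$ applied to the shifted past), yields the chain-rule factorization
\begin{equation}
\nu_{x^-}(A) = \prod_{m=0}^{\ell-1}\nu_{x^{-,m}}\bigl(X_1=a_{m+1}\bigr),
\end{equation}
and an analogous identity for $\hat{x}^-$; all factors are positive by our assumptions.

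It remains to align the ranges of agreement. If $x^-$ and $\hat{x}^-$ agree on $(-k,0]$, then $x^{-,m}$ and $\hat{x}^{-,m}$ agree on $(-(k+m),0]$: the coordinates at positions in $(-m,0]$ are both equal to $(a_1,\ldots,a_m)$ by construction, while the coordinates at positions in $(-(k+m),-m]$ come from $x^-$ and $\hat{x}^-$ at positions in $(-k,0]$, which coincide by hypothesis. Applying Definition~\ref{def:mixingcoeff}(a) with $\Lambda_1=(-(k+m),0]$ and $\Lambda_2=(0,1]$ gives
\begin{equation}
\bigl|\log\nu_{x^{-,m}}(X_1=a_{m+1})-\log\nu_{\hat{x}^{-,m}}(X_1=a_{m+1})\bigr|\leq \phi(k+m),
\end{equation}
and taking logarithms in the chain-rule identity, applying the triangle inequality, and summing over $m$ produces the claim. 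The only delicate step is the shift-invariance bookkeeping needed to identify each successive single-letter conditional distribution with one of the form $\nu_{\tilde{x}^-}(X_1=\,\cdot\,)$; once this identification is made, the remainder of the argument is a routine telescope.
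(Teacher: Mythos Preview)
Your proof is correct and follows essentially the same approach as the paper: both arguments factor a cylinder event via the chain rule for conditional probabilities and bound each factor using the definition of $\phi$. The only cosmetic difference is that the paper first proves the two-step subadditivity $\phi(m,k+\ell)\leq\phi(m,k)+\phi(m+k,\ell)$ and then iterates, whereas you write out the full single-letter factorization directly and are more explicit about the shift-invariance bookkeeping that the paper leaves implicit in the notation $\nu_{x^- x_{(0,k]}}(A_{(k,k+\ell]})$.
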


\begin{proof}
We show that, for all $m\in\N_0$ and $k,\ell\in\N$,
\begin{equation}
\label{eq:add}
\phi(m,k+\ell) \leq \phi(m,k) + \phi(m+k,\ell),
\end{equation}
which yields the claim via iteration. To prove \eqref{eq:add}, pick $x_{(0,k+\ell]} \in 
E^{k+\ell}$ and $x^-,\hat{x}^-\in E^{-\N_0}$ with $(x^-)_{[-m,0]}=(\hat{x})_{[-m,0]}$, 
and consider the events
\begin{equation}
A_{(0,k+\ell]} = \{X_{(0,k+\ell]} = x_{(0,k+\ell]}\}, \quad
A_{(0,k]} = \{X_{(0,k]} = x_{(0,k]}\}, \quad
A_{(k,k+\ell]} = \{X_{(k,k+\ell]} = x_{(k,k+\ell]}\}.
\end{equation}
Estimate
\begin{equation}
\begin{aligned}
\nu_{x^-}(A_{(0,k+\ell]}) &= \nu_{x^-}(A_{(0,k]})\,\nu_{x^-x_{(0,k]}}(A_{(k,k+\ell]})\\
&\leq e^{\phi(m,k)}\,\nu_{\hat{x}^-}(A_{(0,k]})\,e^{\phi(m+k,\ell)}\, 
\nu_{\hat{x}^-x_{(0,k]}}(A_{(k,k+\ell]})\\
&= e^{\phi(m,k)+\phi(m+k,\ell)}\,\nu_{\hat{x}^-}(A_{(0,k+\ell]}),
\end{aligned}
\end{equation}
where $\hat{x}^-x_{(0,k]}$ is the concatenation of $\hat{x}^-$ and $x_{(0,k]}$. Insert 
this estimate into \eqref{def:mixingcoeff} and take the supremum over $x_{(0,k+\ell]}$ and 
$x^-,\hat{x}^-$ to get \eqref{eq:add}.
\end{proof}

Note that $k \mapsto \phi(k)$ is non-increasing on $\N_0$.


\subsection{Summable variation}
\label{S2.3}

The key mixing condition in our LDP's is \emph{summable variation}:
\begin{equation}
\label{eq:SV}
\mbox{(SV)} \qquad \sum_{n\in\N_0} \phi(n) < \infty. 
\end{equation}

\noindent
The term summable variation is borrowed from the theory of Gibbs measures, where
logarithms of probabilities play the role of interaction potentials, and coefficients 
similar to our $\phi(n)$'s are used to measure the absolute summability of these 
interaction potentials. 

\medskip\noindent
(I) Random processes (with finite alphabet) that satisfy (SV) include i.i.d.\  processes ($\phi(n)=0$ 
for all $n\in\N_0$), Markov chains of order $m$ ($\phi(0)<\infty$ and $\phi(n)=0$ for all $n \geq m$), 
and chains with complete connections whose one-letter forward conditional probabilities have summable 
variation. Ledrappier~\cite[Example 2, Proposition 4]{LD76} shows that such chains have a unique 
invariant measure and are Weak Bernoulli under (SV). Berbee~\cite[Theorem 1.1]{B87} shows that 
they have a unique invariant measure and are Bernoulli when $\sum_{n\in\N} \exp[-\sum_{m=1}^n \phi(m)]
=\infty$, a condition slightly weaker than (SV). (Uniqueness of the invariant measure has been proved 
more recently by Johansson and \"{O}berg~\cite{JO} and by Johansson, \"{O}berg and Pollicott~\cite{JOP} 
under the even weaker condition $\sum_{n\in\N} \phi(n)^2 <\infty$.) Yet other examples satisfying (SV) 
include Ising spins labeled by $\Z$ with a ferromagnetic pair potential that has a sufficiently thin tail
(see Berbee~\cite{B87}).

\medskip\noindent
(IIa) A class of random processes that fail to satisfy (SV) is the following. Let $E=\{0,1\}$, 
and let $p$ be any probability law on $\N$ such that $p(\ell) \sim C\ell^{-\gamma}$ for 
some $\gamma>2$. Since $\sum_{\ell\in\N} \ell p(\ell) < \infty$, there exists a stationary 
renewal process $(A_k)_{k\in\Z}$ on $\N_0$ with the following transition probabilities:
\begin{equation}
\P(A_1 = n+1 \mid A_0 = n) = \frac{\sum_{\l > n+1} p(\l)}{\sum_{\l > n} p(\l)},
\quad 
\P(A_1 = 0 \mid A_0 = n) = \frac{p(n+1)}{\sum_{\l > n} p(\l)}, \quad n\in\N_0.
\end{equation}
The process $(X_k)_{k\in\Z}$ defined by $X_k = 1_{\{A_k=0\}}$ fails to satisfy (SV). 
Indeed, pick $n\in\N$ and $x,x[n] \in E^{-\N_0}$ be such that $x_i=1$ for $i \in 
-\N_0$, $x[n]_i = 0$ for $i \in (-n,0]$ and $x[n]_i = 1$ for $i \in (-\infty,-n]$. 
Then
\begin{equation}
\phi(1) \geq \log \nu_{x}(X_1 = 1) - \log \nu_{x[n]}(X_1 = 1) 
= \log p(1) - \log \left(\frac{p(n+1)}{\sum_{\ell > n} p(\l)}\right).
\end{equation}
Since this lower bound holds for all $n\in\N$, we conclude by letting $n\to\infty$ 
that $\phi(1)=\infty$.

\medskip\noindent
(IIb) Another class of random processes that fail to satisfy (SV) is random walk in 
random scenery. Let $S=(S_n)_{n\in\Z}$ be a simple random walk on $\Z^d$, $d \geq 1$, i.e., 
$S_0=0$ and $S_n-S_{n-1}=X_n$ with $(X_n)_{n\in\Z}$ i.i.d.\ random variables uniformly 
distributed on $\{e\in \Z^ d\colon\,\|e\|=1\}$. Let $\xi = (\xi(x))_{x\in\Z^d}$ be 
i.i.d.\ random variables taking the values 0 and 1 with probability $\frac12$ each, 
and define $Z_n = (X_n,\xi(S_n))$. Then $Z=(Z_n)_{n\in\Z}$ is stationary and ergodic, 
but not i.i.d. In den Hollander and Steif~\cite[Theorems 2.4 and 2.5]{dHS97} it is 
shown that $Z$ is \emph{Weak Bernoulli} if and only if $d \geq 5$. Since (SV) implies 
Weak Bernoulli (Ledrappier~\cite[Proposition 4]{LD76}), $Z$ does not satisfy (SV) 
when $1 \leq d \leq 4$.


\section{Annealed LDP}
\label{S3}

The annealed LDP in Theorem~\ref{annLDP} is a process-level LDP. Such LDP's were proven 
by Donsker and Varadhan~\cite{DV83, DV85} for reference processes that are Markov or Gaussian. 
Orey~\cite{O84} and Orey and Pelikan~\cite{OP88} gave a proof for \emph{ratio-mixing} 
processes (see below), using the observation that any random process can be viewed
as a Markov process by keeping track of its past.

\begin{proposition}{\rm (Orey and Pelikan~\cite[Theorem 2.1]{OP88})}
\label{pr:Orey}
Suppose that $\P$ has the following ratio-mixing and continuous-dependence properties:
\begin{equation}
\label{eq:RM}
\begin{aligned}
\mbox{{\rm (RM)}} \qquad
&\hbox{There exists a non-decreasing function $n\mapsto m(n)$ such that}\\ 
&0 \leq m(n) < n, \quad \lim_{n\to\infty} m(n)/n = 0, \quad \lim_{n\to\infty} 
\psi((m(n),n])/n = 0.\\
\mbox{{\rm (CD)}} \qquad
&\hbox{For all measurable continuous functions $f\colon\,\tilde{E}^{-\bbN_0 \cup \{1\}} 
\mapsto \bbR$,}\\ 
&y^- \mapsto \int_{\tilde{E}^{-\bbN_0 \cup \{1\}}} f(y_{(-\infty,1]})\,\dd\P_{y^-}(y_{(-\infty,1]}) 
\hbox{ is continuous.}
\end{aligned}
\end{equation}
Then the family of probability laws $\P(R_n \in \cdot)$, $n\in\N$, satisfies the LDP on 
$\Pinv(\tilde{E}^{\bbZ})$ with rate $n$ and with rate function given by the specific 
relative entropy
\begin{equation}
\label{eq:specrelentr}
Q \mapsto H(Q \mid \P) = \int_{y^-\in\tilde{E}^{-\N_0}} Q(\dd y^-)
\int_{y\in \tilde{E}} Q_{y^-}|_1(\dd y)\,
\log\left(\frac{dQ_{y^-}|_1}{d\P_{y^-}|_1}(y)\right),
\end{equation}
where $Q_{y^-}|_1$ and $\P_{y^-}|_1$ are the one-word marginals of $Q_{y^-}$ and $\P_{y^-}$ 
(i.e., of $Q$ and $P$ conditional on $y^-$).
\end{proposition}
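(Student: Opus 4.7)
My approach is a Gärtner–Ellis argument on the empirical process, in which (RM) is used to establish the existence of a logarithmic moment generating function and (CD) is used to identify its Legendre transform as the specific relative entropy $H(\,\cdot\, \mid \P)$. Equivalently, the word process can be viewed as a Markov chain on the space of pasts $\tilde{E}^{-\N_0}$ with weakly continuous transition kernel $y^- \mapsto \P_{y^-}|_1$ (continuity being precisely (CD)), and (RM) plays the role of the mixing hypothesis that replaces the strong Feller / compactness assumption in Donsker–Varadhan's framework.

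For the upper bound, the first step is to prove, for bounded cylinder functions $f:\tilde{E}^{\Z} \to \R$, the existence of
\begin{equation*}
\Lambda(f) = \lim_{n \to \infty} \frac{1}{n} \log E_\P\Bigl[\exp\Bigl(\sum_{i=0}^{n-1} f \circ \tilde{\theta}^i Y\Bigr)\Bigr].
\end{equation*}
Definition~\ref{def:mixingcoeff}(b) yields the two-sided bound $e^{-\psi(\Lambda_0)}\,\P(A) \leq \P_{y^-}(A) \leq e^{\psi(\Lambda_0)}\,\P(A)$ for $A \in \cG_{\Lambda_0}$ and any past $y^-$. Partitioning $(0,n]$ into $k \approx n/L$ blocks of length $L$ separated by gaps of length $m(L)$ and iterating this bound across blocks gives approximate sub- and supermultiplicativity of the exponential moments; the hypothesis $\psi((m(n),n])/n \to 0$ forces the cumulative decoupling cost $k\psi((m(L),L])$ to be $o(n)$, so $\Lambda(f)$ exists. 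Exponential Chebyshev then yields $\limsup_n \tfrac{1}{n}\log \P(R_n \in C) \leq -\inf_{Q \in C}\Lambda^*(Q)$ for closed $C \subset \Pinv(\tilde{E}^{\Z})$; the identification $\Lambda^*(Q) = H(Q \mid \P)$ proceeds via the Gibbs variational principle combined with the fiber-wise formula \eqref{eq:specrelentr}, which in turn rests on the martingale convergence of $\P(\,\cdot \mid \cG_{(-\infty,0]})$ together with the continuity of that kernel guaranteed by (CD).

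For the lower bound, fix an ergodic $Q$ with $H(Q \mid \P) < \infty$ and a neighborhood $G \ni Q$; a change of measure gives
\begin{equation*}
\P(R_n \in G) \geq E_Q\Bigl[\ind_{\{R_n \in G\}}\,\frac{d\P|_{\cG_{(0,n]}}}{dQ|_{\cG_{(0,n]}}}\Bigr].
\end{equation*}
Birkhoff's theorem under $Q$ forces $R_n \to Q$ in $Q$-probability (so $\{R_n \in G\}$ is eventually $Q$-typical), while the Shannon–McMillan–Breiman theorem and its Barron / Algoet–Cover extension to relative entropy give $-\tfrac{1}{n}\log[d\P|_{\cG_{(0,n]}}/dQ|_{\cG_{(0,n]}}] \to H(Q \mid \P)$ in $Q$-probability; combining these produces $\P(R_n \in G) \geq \exp(-n[H(Q \mid \P) + o(1)])$. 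Non-ergodic $Q$ are then handled by the ergodic decomposition together with the affinity of $H(\,\cdot \mid \P)$.

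The main obstacle is keeping the block-decoupling uniform in the number of blocks: the condition $\psi((m(n),n])/n \to 0$ is calibrated precisely so that the total error over the $\approx n/L$ blocks is $o(n)$, and any weaker control would destroy the exponential scaling. A secondary difficulty is that the periodization $(Y_{(0,n]})^{\rm per}$ entering \eqref{def:empword} creates a boundary identification between positions $n$ and $0$ that is not present under $\P$; this produces an $O(1)$ per-block artifact that vanishes in the rate, but the cleanest way to verify that it does not shift the rate function is once again to invoke (CD) to control the boundary-dependent one-word marginal.
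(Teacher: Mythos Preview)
The paper does not prove Proposition~\ref{pr:Orey}; it is quoted verbatim from Orey and Pelikan~\cite[Theorem 2.1]{OP88}, and the paper's only work around it is to verify (RM) and (CD) under (SV) in the proof of Theorem~\ref{annLDP}. Your sketch therefore goes beyond what the paper does: you are outlining a proof of the cited result itself.

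Your outline is broadly the right shape --- a G\"artner--Ellis upper bound using (RM) to get approximate multiplicativity of exponential moments via block decoupling, and a change-of-measure lower bound using the Shannon--McMillan--Breiman/Barron relative-entropy AEP --- and the identification of (CD) with the Feller property of the past-process kernel is exactly the point. There is, however, one genuine gap. The word space $\tilde{E}$ is countable but not compact under the discrete topology, so $\Pinv(\tilde{E}^{\Z})$ is not compact, and the exponential Chebyshev argument you describe yields only the \emph{weak} upper bound (over compact sets). Passing to closed sets requires exponential tightness, which you do not address. The paper itself flags this when it applies the proposition: it notes that \cite{OP88} assumes a compact state space, that the non-compact extension is supplied by Orey~\cite[Theorem 5.11]{O84}, and that the relevant tightness hypothesis is verified by choosing truncated spaces $\tilde{E}_{(\ell_n)}=\bigcup_{1\le k\le \ell_n}E^k$ with $\P(T_1>\ell_n)\le 2^{-n}$. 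Without this ingredient your G\"artner--Ellis route does not close, and it is not a cosmetic omission: the tail of the word-length distribution $\rho$ is precisely what controls escape of mass in $\tilde{E}$.
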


The specific relative entropy $H(Q \mid \P)$ is defined to be infinite when 
$Q_{y^-}|_1 \ll \P_{y^-}|_1$ fails on a set of $y^-$'s with a strictly positive 
${Q}$-measure. An alternative form of \eqref{eq:specrelentr} is
\begin{equation}
H(Q \mid \P) = \int_{y^-\in\tilde{E}^{-\N_0}} Q(\dd y^-)\,
h\big(Q_{y^-}(Y_1 \in \cdot\,) \mid \P_{y^-}(Y_1 \in \cdot\,)\big),
\end{equation}
where $h(\,\cdot\,|\,\cdot\,)$ denotes relative entropy. The latter can be viewed as 
the specific relative entropy of the laws of two Markov processes, namely, the laws 
of the {\it past processes} $Y^*=(Y^{(n),*})_{n\in\N}$ with $Y^{(n),*} = (Y^{(n-m)})_{m\in\N}$, 
$n\in\N$, when $Y$ is distributed according to $Q$, respectively, $P$. The regular conditional 
probability laws $(\P_{y^-}(Y_1\in\cdot\,),y^- \in \tilde{E}^{-\N_0})$ play the role of transition 
probabilities for $Y^*$, and regularity translates into the Feller property.

\medskip
We are now ready to prove Theorem~\ref{annLDP}.
\begin{proof}
Theorem~\ref{annLDP} follows by an application of Proposition~\ref{pr:Orey}, which is a rewriting of Theorem 2.1 in Orey and Pelikan~\cite{OP88}. The state
space in Orey and Pelikan~\cite{OP88} is assumed to be compact, which is not the case for $\tilde{E}$ under the discrete topology. The non-compact case is
treated by 
Orey~\cite[Theorem 5.11]{O84}. Conditions 5.8 and Eq. (3.3) in Orey~\cite{O84} correspond respectively to Conditions (RM) and (CD) in this paper. The condition in Eq. (3.2) of Orey~\cite{O84} is implied by Orey~\cite[Theorem 3.4]{O84}, which holds by
choosing the sequence of truncated state spaces $\tildeE_{(\ell_n)} = \bigcup_{1\leq k \leq \ell_n} E^k$, where $\ell_n$ is any strictly increasing sequence of integers
satisfying $\bP(T_1 > \ell_n) \leq 2^{-n}$. First we check that $\P$ satisfies (RM). From Lemma~\ref{l:fromphitopsi} and the fact that 
$\ell\mapsto\phi(0,\ell)$ is non-decreasing, we get $P_{y^-}(A) \leq e^{\phi(0,\infty)}
P_{\hat{y}^-}(A)$. Hence Definition~\ref{def:mixingcoeff}(b) gives $\psi((m,n]) \leq 
\phi(0,\infty)$ for all $0\leq m<n$. From Lemma~\ref{l:telescoping} we get
\begin{equation}
\phi(0,\infty) \leq \sum_{n\in\N_0} \phi(n).
\end{equation}
Hence, if (SV) holds, then (RM) holds for $m(n)=0$. Next we check that $\P$ satisfies 
(CD). Note that it is enough to consider measurable $f\colon\,\tilde{E} \mapsto \bbR$
because 
\begin{equation}
\P_{y^-}(Y_{(-\infty,1]}=y_{(-\infty,1]}) = \ind_{\{y_{(-\infty,0]}= y^-\}} \P_{y^-}(Y_1 = y_1).
\end{equation}
Choose $y^-$ and $\hat{y}^-$ such that $y^-_{(-k,0]} = \hat{y}^-_{(-k,0]}$ for some 
$k\in\bbN$. From Lemmas~\ref{l:fromphitopsi}--\ref{l:telescoping} we obtain
\begin{equation}
\sum_{y_1 \in \tilde{E}} f(y_1) \P_{y^-}(Y_1 = y_1) 
\leq e^{\sum_{\ell\geq k+1} \varphi(\ell)} \sum_{y_1 \in \tilde{E}} f(y_1) \P_{\hat{y}^-}(Y_1 = y_1).
\end{equation}
The same statement holds with $y^-$ and $\hat{y}^-$ interchanged. Under (SV), 
$\lim_{k\to\infty} \sum_{\ell\geq k+1} \varphi(\ell) =0$, which proves (CD).
\end{proof}


\section{Quenched LDP}
\label{S4}

In Sections~\ref{S4.1}--\ref{S4.3} we prove several lemmas that are needed in 
Section~\ref{S4.4} to give the proof of Theorem~\ref{queLDP}. This proof is an
extension of the proof in \cite{BGdH10} for i.i.d.\ $\nu$. We focus on those 
ingredients where the lack of independence of $\nu$ requires modifications.


\subsection{Decoupling inequalities}
\label{S4.1}

Abbreviate
\begin{equation}
C(\phi) = \exp\left[\sum_{n\in\N_0} \phi(n)\right] < \infty.
\end{equation}
\begin{lemma}
\label{lem:changingpast}
For all $x^-$, $\hat{x}^-\in E^{-\N_0}$, $A\in \mathcal{F}_{(0,\infty)}$ and $n\in\N$,
\begin{eqnarray}
\label{sandwich1}
&&C(\phi)^{-1} \nu_{\hat{x}^-}(A) \leq \nu_{x^-}(A) \leq C(\phi) \nu_{\hat{x}^-}(A),\\
\label{sandwich2}
&&C(\phi)^{-1} \nu_{\hat{x}^-}(A) \leq \nu\big(A \mid X_{(-n,0]}=x^-_{(-n,0]}\big) 
\leq C(\phi) \nu_{\hat{x}^-}(A).
\end{eqnarray}
\end{lemma}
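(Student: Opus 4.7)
The plan is to reduce both inequalities to a single bound: the mixing coefficient controlling the dependence of $\nu_{x^-}(A)$ on the past is uniformly bounded by $\log C(\phi)$ for events $A$ in the future of any finite horizon. Everything then follows from Lemma~\ref{l:telescoping} together with a monotone-class approximation and, for \eqref{sandwich2}, an extra integration step.

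First I would prove \eqref{sandwich1} for cylinder events $A \in \cF_{(0,\ell]}$. By Definition~\ref{def:mixingcoeff}(a) with $\Lambda_1 = \emptyset$ and $\Lambda_2 = (0,\ell]$,
\begin{equation}
\bigl|\log \nu_{x^-}(A) - \log \nu_{\hat{x}^-}(A)\bigr| \leq \phi(0,\ell),
\end{equation}
whenever the left-hand side is well-defined. Lemma~\ref{l:telescoping} gives $\phi(0,\ell) \leq \sum_{m=0}^{\ell-1}\phi(m) \leq \log C(\phi)$, uniformly in $\ell$. Hence \eqref{sandwich1} holds for every finite cylinder $A$. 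To pass to arbitrary $A \in \cF_{(0,\infty)}$, I would invoke the monotone class theorem: the collection of $A \in \cF_{(0,\infty)}$ for which \eqref{sandwich1} holds contains the algebra of cylinders and is closed under monotone limits (the two-sided bound survives because both $\nu_{x^-}$ and $\nu_{\hat{x}^-}$ are probability measures, and the constant $C(\phi)$ is uniform in $\ell$). If one side vanishes, the sandwich bound forces the other to vanish as well, avoiding the $\infty - \infty$ issue built into Definition~\ref{def:mixingcoeff}.

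Next I would deduce \eqref{sandwich2} from \eqref{sandwich1}. The point is that $\nu_{x^-}$ conditions on the full past $X_{(-\infty,0]} = x^-$, while $\nu(\,\cdot\mid X_{(-n,0]} = x^-_{(-n,0]})$ conditions only on the last $n$ letters of the past. By the tower property and regularity of the conditional probabilities, there exists a probability kernel $\mu_{x^-_{(-n,0]}}(\dd z^-)$ on $E^{(-\infty,-n]}$ such that
\begin{equation}
\nu\bigl(A \mid X_{(-n,0]} = x^-_{(-n,0]}\bigr)
= \int \nu_{(z^-,\,x^-_{(-n,0]})}(A)\,\mu_{x^-_{(-n,0]}}(\dd z^-).
\end{equation}
Applying \eqref{sandwich1} pointwise under the integral to compare each $\nu_{(z^-,\,x^-_{(-n,0]})}(A)$ with $\nu_{\hat{x}^-}(A)$, pulling the bounds $C(\phi)^{\pm 1}$ out of the integral (they are deterministic and independent of $z^-$), and using that $\mu_{x^-_{(-n,0]}}$ is a probability measure yields \eqref{sandwich2}.

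The only real subtlety is the monotone-class extension, which requires verifying that the bounds survive the passage to general measurable $A$ and that the trivial cases $\nu_{\hat{x}^-}(A)\in\{0,1\}$ are handled correctly; this is routine once cylinders are settled, since the inequalities in \eqref{sandwich1} are equivalent to $\nu_{x^-}|_{\cF_{(0,\ell]}}$ and $\nu_{\hat{x}^-}|_{\cF_{(0,\ell]}}$ being mutually absolutely continuous with Radon-Nikodym derivative in $[C(\phi)^{-1},C(\phi)]$, and this property is preserved by consistent projective limits.
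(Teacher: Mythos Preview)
Your proposal is correct and follows essentially the same approach as the paper: for \eqref{sandwich1} both use Definition~\ref{def:mixingcoeff}(a) with $\Lambda_1=\emptyset$ together with Lemma~\ref{l:telescoping} to get the uniform bound $\phi(0,\ell)\leq\log C(\phi)$ on cylinders, and for \eqref{sandwich2} both express the finite-past conditional probability as an average of full-past conditionals and apply \eqref{sandwich1} under the integral. The only cosmetic differences are that the paper writes out the shift-and-integrate computation for \eqref{sandwich2} explicitly rather than invoking the tower property abstractly, and it leaves the monotone-class extension from cylinders to $\cF_{(0,\infty)}$ implicit where you spell it out.
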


\begin{proof}
To prove \eqref{sandwich1}, pick $k\in\N$ and $A\in \cF_{(0,k)}$. If $\nu_{\hat{x}^-}(A)=0$ then $\nu_{x^-}(A)=0$ as well because $\phi(k)<\infty$ and there is nothing to prove, so we can assume $\nu_{\hat{x}^-}(A)>0$. Then, by the definition
of $\phi(k)$ and Lemma \ref{l:telescoping}, 
\begin{equation}
e^{-C(\phi)} \leq e^{-\phi(0,k)} \leq \frac{\nu_{x^-}(A)}{\nu_{\hat{x}^-}(A)} 
\leq e^{\phi(0,k)} \leq e^{C(\phi)}.
\end{equation}
To prove \eqref{sandwich2}, write
\begin{equation}
\begin{aligned}
\nu\big(A \mid X_{(-n,0]}=x^-_{(-n,0]}\big)
&= \frac{\nu(\{X_{(-n,0]}=x^-_{(-n,0]}\} \cap A)}
{\nu(X_{(-n,0]}=x^-_{(-n,0]})}\\
&= \frac{\int_{\tilde{x}^-\in\E^{-\N_0}} d\nu(\tilde{x}^-)\,
\nu_{\tilde{x}^-}(\{X_{(0,n]} = x^-_{(-n,0]}\} \cap \theta^{-n}A)} 
{\int_{\tilde{x}^-\in\E^{-\N_0}} d\nu(\tilde{x}^-)\,
\nu_{\tilde{x}^-}(X_{(0,n]} = x^-_{(-n,0]})}\\
&= \frac{\int_{\tilde{x}^-\in\E^{-\N_0}} d\nu(\tilde{x}^-)\,
\nu_{\tilde{x}^-}(X_{(0,n]} = x^-_{(-n,0]}) \nu_{\tilde{x}^-x^-_{(-n,0]}}(A)}
{\int_{\tilde{x}^-\in\E^{-\N_0}} d\nu(\tilde{x}^-)\,
\nu_{\tilde{x}^-}(X_{(0,n]} = x^-_{(-n,0]})}\\
&\leq \frac{\int_{\tilde{x}^-\in\E^{-\N_0}} d\nu(\tilde{x}^-)\,
\nu_{\tilde{x}^-}(X_{(0,n]} = x^-_{(-n,0]})\,e^{C(\phi)}\,\nu_{\hat{x}^-}(A)}
{\int_{\tilde{x}^-\in\E^{-\N_0}} d\nu(\tilde{x}^-)\,
\nu_{\tilde{x}^-}(X_{(0,n]} = x^-_{(-n,0]})}\\ 
&= e^{C(\phi)}\,\nu_{\hat{x}^-}(A),
\end{aligned}
\end{equation}
where $\tilde{x}^-x^-_{(-n,0]}$ is the concatenation of $\tilde{x}^-$ and $x^-_{(-n,0]}$, 
and the inequality uses \eqref{sandwich1}. The reverse inequality is obtained 
in a similar manner.
\end{proof}

\begin{lemma}
\label{lem:decoupling} 
Let $m\in\N$, and let $(i_1,\ldots,i_m)$, $(j_1,\ldots,j_m)$ be two collections of 
integers satisfying $i_1 < j_1 \leq i_2 < j_2 \leq \ldots < i_{m-1} < j_{m-1} \leq 
i_m < j_m$. For $1\leq k\leq m$, let $A_k\in\cF_{(i_k,j_k]}$ and $p_k= \nu(A_k)$. 
Suppose that $\nu$ satisfies condition {\rm (SV)}. Then
\begin{equation}
\nu\left(\cap_{1\leq k \leq m} A_k\right) 
\leq C(\phi)^{m-1} \prod_{1\leq k \leq m} p_k. 
\end{equation}
\end{lemma}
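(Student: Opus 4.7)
The plan is to peel off events from right to left, using Lemma~\ref{lem:changingpast} at each step to decouple the last event from the earlier ones at a cost of one factor $C(\phi)$, and then to iterate $m-1$ times.

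First, I would upgrade \eqref{sandwich1} to a uniform bound of the form $\nu_{x^-}(A) \le C(\phi)\,\nu(A)$ for any $A\in\cF_{(0,\infty)}$ and any $x^-\in E^{-\N_0}$. Indeed, \eqref{sandwich1} gives $\nu_{x^-}(A) \le C(\phi)\,\nu_{\hat{x}^-}(A)$ for every $\hat{x}^-$; integrating over $\hat{x}^-$ against $\nu$ and using that $\nu(A) = \int \nu_{\hat{x}^-}(A)\,\dd\nu(\hat{x}^-)$ (the defining property of the regular conditional probability) yields the claim. This is the key one-step decoupling estimate.

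Next I would proceed by induction on $m$. The case $m=1$ is trivial. For the inductive step, by shift-invariance of $\nu$ I may shift indices so that $i_m=0$; then $A_m\in\cF_{(0,j_m-i_m]}\subseteq\cF_{(0,\infty)}$ and $B:=\bigcap_{k=1}^{m-1}A_k\in\cF_{(-\infty,0]}$. Using that $\nu_{x^-}$ is a regular version of $\nu(\cdot\mid\cF_{(-\infty,0]})$ together with the preliminary bound,
\[
\nu(B\cap A_m) \;=\; \int_B \nu_{x^-}(A_m)\,\dd\nu(x^-) \;\le\; C(\phi)\,\nu(A_m)\,\nu(B) \;=\; C(\phi)\,p_m\,\nu(B).
\]
Applying the induction hypothesis to $\nu(B)$ (with the $m-1$ events $A_1,\ldots,A_{m-1}$ on their disjoint intervals) to bound $\nu(B)\le C(\phi)^{m-2}\prod_{k=1}^{m-1} p_k$ then closes the induction with the correct power $C(\phi)^{m-1}$.

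I do not anticipate a significant obstacle: Lemma~\ref{lem:changingpast} already does the essential decoupling in one step, and (SV) is used only implicitly through the finiteness of $C(\phi)$. The only minor point to attend to is the use of shift-invariance at each stage of the induction, so that the rightmost interval always lies in the future half-line $(0,\infty)$ and the regular version $\nu_{x^-}$ can be invoked as above.
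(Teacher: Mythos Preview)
Your proof is correct and follows essentially the same approach as the paper: both arguments shift by stationarity to place one block on the appropriate side of the origin and then invoke Lemma~\ref{lem:changingpast} to decouple at the cost of a factor $C(\phi)$, iterating $m-1$ times. The only cosmetic difference is that the paper conditions on the \emph{finite} past via \eqref{sandwich2} and averages over an auxiliary $x^-$ afterward, whereas you condition on the full past and integrate \eqref{sandwich1} over $\hat{x}^-$ up front to obtain $\nu_{x^-}(A)\le C(\phi)\nu(A)$ directly; these are equivalent maneuvers.
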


\begin{proof} 
We give the proof for $m=2$. The general case can be handled by induction. Let 
$i_1 < j_1 \leq i_2 < j_2$, $A_1 \subset E^{j_1 - i_1}$ and $A_2 \subset E^{j_2 - i_2}$. 
For all ${x^-}\in\E^{-\N_0}$,
\begin{equation}
\begin{aligned}
&\nu\big(X_{(i_1,j_1]} \in A_1, X_{(i_2,j_2]} \in A_2\big)\\ 
&= \sum_{\substack{x_{(i_1,j_1]}\in A_1 \\ x_{(i_2,j_2]}\in A_2}} 
\nu\big(X_{(i_1,j_1]} = x_{(i_1,j_1]}, X_{(i_2,j_2]} = x_{(i_2,j_2]}\big)\\
&= \sum_{\substack{x_{(i_1,j_1]}\in A_1 \\ x_{(i_2,j_2]}\in A_2}} 
\nu\big(X_{(i_1-j_1,0]} = x_{(i_1,j_1]}, X_{(i_2-j_1,j_2-j_1]} = x_{(i_2,j_2]}\big)\\
&= \sum_{\substack{x_{(i_1,j_1]}\in A_1 \\ x_{(i_2,j_2]}\in A_2}} 
\nu\big(X_{(i_1-j_1,0]} = x_{(i_1,j_1]}\big)\, 
\nu\big(X_{(i_2-j_1,j_2-j_1]} = x_{(i_2,j_2]} \mid X_{(i_1-j_1,0]} = x_{(i_1,j_1]}\big)\\
&\leq C(\phi) \sum_{\substack{x_{(i_1,j_1]}\in A_1 \\ x_{(i_2,j_2]}\in A_2}} 
\nu\big(X_{(i_1-j_1,0]} = x_{(i_1,j_1]}\big)\, 
\nu_{x^-}\big(X_{(i_2-j_1,j_2-j_1]} = x_{(i_2,j_2]}\big)\\
&= C(\phi)p_1 \sum_{x_{(i_2,j_2]}\in A_2} 
\nu_{x^-}\big(X_{(i_2-j_1,j_2-j_1]} = x_{(i_2,j_2]}\big),
\end{aligned}
\end{equation}
where the inequality uses \eqref{sandwich2} in Lemma \ref{lem:changingpast}. Averaging 
${x^-}$ w.r.t.\ $\nu$, we get 
\begin{equation}
\nu(X_{(i_1,j_1]} \in A_1, X_{(i_2,j_2]} \in A_2) 
\leq C(\phi)p_1 p_2.
\end{equation}
\end{proof}


\subsection{Successive occurrences of patterns}
\label{S4.2}

\begin{lemma}
\label{lem:rectimes}
Fix $m \in \N$ and let  $A \in \cF_{(0,m]}$ be such that $\nu(A)>0$. Let 
$(\sigma_n)_{n\in\Z}$ be defined by
\begin{equation}
\begin{aligned}
\sigma_0 &= \inf\{k\geq 0\colon\, \theta^kX\in A\}+m,\\
\forall\, n\in\N, \qquad \sigma_n &= \inf\{k\geq\sigma_{n-1}\colon\,
\theta^k X \in A\}+m,\\
\forall\,n\in-\N, \qquad \sigma_n &= \sup\{k\leq\sigma_{n+1}-2m\colon\,
\theta^k X \in A\}+m.
\end{aligned}
\end{equation}
If $\nu$ satisfies condition {\rm (SV)}, then $\nu$-a.s.,
\begin{equation}
\label{eq:sumlogsigma}
\limsup_{n\to\infty} \frac{1}{n} \sum_{1\leq \l\leq n} 
\log[\sigma_{\l} - \sigma_{\ell-1}] 
\leq \log E_{\nu}[\sigma_1] + \log C(\phi).
\end{equation}
\end{lemma}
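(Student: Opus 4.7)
The plan is a three-step argument: Jensen's inequality to move the logarithm outside the Cesàro sum, Birkhoff's ergodic theorem applied to the first-return dynamics on occurrences of $A$ to identify the limiting arithmetic mean of the gaps, and finally the decoupling estimates of Lemmas~\ref{lem:changingpast}--\ref{lem:decoupling} to bound this limit by $C(\phi)\,E_\nu[\sigma_1]$.

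Setting $\tau_\ell := \sigma_\ell - \sigma_{\ell-1}$ for $\ell\geq 1$, concavity of the logarithm yields the deterministic inequality
\begin{equation*}
\frac{1}{n}\sum_{\ell=1}^n \log \tau_\ell \;\leq\; \log\Bigl(\frac{1}{n}\sum_{\ell=1}^n \tau_\ell\Bigr) \;=\; \log\Bigl(\frac{\sigma_n-\sigma_0}{n}\Bigr),
\end{equation*}
so it suffices to prove $\limsup_{n\to\infty}(\sigma_n-\sigma_0)/n \leq C(\phi)\,E_\nu[\sigma_1]$ $\nu$-almost surely. Since (SV) implies that $\nu$ is ergodic (indeed Bernoulli, by the discussion in Section~\ref{S2.3}), the increments $(\tau_\ell)_{\ell\geq 1}$ form a stationary ergodic sequence under the first-return transformation induced by the pattern $A$. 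Birkhoff's ergodic theorem applied to this induced system then gives $(\sigma_n-\sigma_0)/n \to E^{\rm ind}_\nu[\tau_1]$ $\nu$-a.s., where the expectation is under the induced measure on trajectories starting just past an occurrence of $A$.

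It then remains to show $E^{\rm ind}_\nu[\tau_1] \leq C(\phi)\,E_\nu[\sigma_1]$. My approach would be to invoke the sandwich estimate \eqref{sandwich2} of Lemma~\ref{lem:changingpast} a single time, in order to pass from the induced measure (conditioned on a past ending with an occurrence of $A$) to the unconditional stationary measure $\nu$ at multiplicative cost $C(\phi)$; this reduces the task to a Kac-type comparison of a $\nu$-stationary expected gap to $E_\nu[\sigma_1]$. The stationary expected gap between consecutive occurrences of $A$ is of order $1/\nu(A)$, and $E_\nu[\sigma_1]$ is of the same order since $\sigma_1$ already encodes the end of the second occurrence; Lemma~\ref{lem:decoupling} applied to disjoint length-$m$ windows confirms the required finiteness via the tail bound $\nu(\tau_1 > km) \leq C(\phi)^{k-1}(1-\nu(A))^k$ and also handles the contribution of the initial offset $\sigma_0$.

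The main obstacle is precisely to secure the \emph{single} factor $C(\phi)$ in the lemma, rather than a power $C(\phi)^{k-1}$: the naïve route of summing the preceding tail bound in $k$ produces a geometric series that diverges when $C(\phi)(1-\nu(A))\geq 1$. Absorbing the mixing cost into one application of Lemma~\ref{lem:changingpast} before invoking the Kac identity for $\nu$ itself (rather than iterating Lemma~\ref{lem:decoupling}) appears to be the key idea that keeps the multiplicative cost bounded by exactly $C(\phi)$.
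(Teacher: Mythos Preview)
Your strategy is essentially the paper's, with Jensen and Birkhoff applied in the opposite order: you use Jensen pointwise on the empirical mean and then Birkhoff on $\tau_\ell$, whereas the paper applies Birkhoff to $\log\tau_\ell$ first and then Jensen to the resulting expectation $E_{\P_A}[\log(\sigma_1-\sigma_0)]\le\log E_{\P_A}[\sigma_1-\sigma_0]$. Both orderings work, and in both cases the single factor $C(\phi)$ is obtained exactly as you describe in your last paragraph: one application of Lemma~\ref{lem:changingpast} to replace the past of the induced (block) measure by an arbitrary past, followed by averaging over $\nu$. The paper then uses the trivial bound $E_\nu[\sigma_1-\sigma_0]\le E_\nu[\sigma_1]$; no Kac identity is needed, and the tail-bound digression is unnecessary (if $E_\nu[\sigma_1]=\infty$ the claim is vacuous).

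There is, however, one genuine imprecision in your Step~2. Your appeal to ``the first-return transformation induced by the pattern $A$'' is not quite right: the gaps $\tau_\ell$ record \emph{non-overlapping} occurrences (the infimum defining $\sigma_n$ starts at $\sigma_{n-1}$, not at $\sigma_{n-1}-m+1$), so the relevant map is $X\mapsto\theta^{r'(X)}X$ with $r'(X)=\inf\{k\ge m:\theta^k X\in\tilde B\}$ on $\tilde B=\{X_{(-m,0]}\in A\}$. This map does \emph{not} preserve $\nu(\cdot\mid\tilde B)$ in general (already for $m=2$, $A=\{(1,1)\}$ under an i.i.d.\ fair coin one checks that the law of $X_{-2}$ changes after one step), so ergodicity of $\nu$ alone does not immediately give a stationary ergodic sequence $(\tau_\ell)$. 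The paper handles this by constructing the Markov past-process $B^\star_n=X_{(-\infty,\sigma_n]}$, identifying its stationary law $\P^\star_A$ explicitly, and proving ergodicity via the Weak Bernoulli property (again from one use of Lemma~\ref{lem:changingpast}). Your argument needs the same construction, or an equivalent one, before Birkhoff can be invoked.
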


\begin{proof}
The strategy of proof consists in writing the sum in \eqref{eq:sumlogsigma} as an 
additive functional of an ergodic process and to use Birkhoff's ergodic theorem. First 
note that the sequence of times $(\sigma_n)_{n\in\Z}$ cuts a sequence of blocks
$B=(B_n)_{n\in\Z}$ out of the letter sequence $X$ given by
\begin{equation}
B_n = X_{(\sigma_{n-1},\sigma_n]}\in \tilde{E}.
\end{equation}
Each of these blocks belongs to the following subset of words:
\begin{equation}
\tilde{E}_{A} = \big\{y\in \tilde{E}\colon\,|y|\geq m;\,
\forall\,0\leq k<|y|-m\colon\,y_{(k,k+m]} \notin A;\,y_{(|y|-m,|y|]} \in A\big\}.
\end{equation}
Define the process $B^{\star}=(B^{\star}_n)_{n\in\Z}$ in $E^{-\N_0}$ by putting
$B^{\star}_n = X_{(-\infty,\sigma_n]}$. This process is Markovian and its transition 
kernel is given by
\begin{equation}
\P^\star_A(\hat{x}|x) = \P(B^{\star}_{n+1} = \hat{x} \mid B^{\star}_n = x) 
= \sum_{y\in \tilde{E}_A} \ind_{\{\hat{x} = xy\}} \nu_x(X_{(0,|y|]}=y), 
\qquad x,\hat{x} \in E^{-\N_0},
\end{equation}
where $xy$ is the concatenation of $x$ and $y$. For the collection $(\P^\star_A(\cdot|x),
x\in E^{-\N_0})$ to be a proper transition kernel, $\sigma_1$ must be $\nu_x$-a.s.\ finite 
for all $x\in E^{-\N_0}$. Since $\nu(A)>0$, we know from the Recurrence Theorem in 
Halmos~\cite{H60} that $\sigma_1$ is $\nu$-a.s.\ finite. But since $\nu$ and $(\nu_x)_{x\in
E^{-\N_0}}$ are equivalent under condition (SV) (note that $C(\phi)^{-1}\nu(\cdot)\leq
\nu_x(\cdot) \leq C(\phi)\nu(\cdot)$ as a consequence of \eqref{sandwich1} in 
Lemma~\ref{lem:changingpast}), $\sigma_1$ indeed is $\nu_x$-a.s.\ finite for all 
$x\in E^{-\N_0}$. Since (with a slight abuse of notation) the $B^{\star}_n$'s are also 
in $E^{-\N_0} \times \tilde{E}_A$, we can write 
\begin{equation}
\sum_{1\leq \ell\leq n} \log[\sigma_\ell - \sigma_{\ell-1}] 
= \sum_{1\leq \ell\leq n} \log |\pi(B^{\star}_{\ell})|,
\end{equation}
where $\pi$ is defined by $\pi\colon\,(u,v)\in E^{-\N_0}\times \tilde{E}_A \mapsto v$. 
We next apply Birkhoff's ergodic theorem to the sum in the right-hand side, i.e., to 
the process $B^{\star}$. This process has a stationary distribution, which we denote 
by $\P^\star_A$. It is easy to check that $\P^\star_A$ is the law of $X_{(-\infty,\sigma_0]}$ 
conditional on the event $\cap_{\ell\in -\N_0}\{\sigma_{\ell} > -\infty\}$, which has 
probability one according to the Recurrence Theorem. Again using \eqref{sandwich1}
in Lemma~\ref{lem:changingpast}, we see that for all sets $\cA$ and $\cB$ that
are measurable w.r.t.\ $\sigma(B_{(-\infty,0]})$ and $\sigma(B_{(0,\infty)})$,
respectively, 
\begin{equation}
C(\phi)^{-1} \P_A(\cA)\P_A(\cB) \leq \P_A(\cA \cap \cB) \leq C(\phi) \P_A(\cA)\P_A(\cB),
\end{equation} 
where $\P_A$ is the law of $B$ induced by $\P^\star_A$. Therefore $\P_A$ is Weak 
Bernoulli (Ledrappier~\cite{LD76}), and hence is ergodic. Thus, we have
\begin{equation}
\lim_{n\to\infty} \frac{1}{n}\sum_{1\leq \l\leq n} 
\log[\sigma_\l - \sigma_{\l-1}] = E_{\P_A}(\log [\sigma_1 - \sigma_0])\, \leq \log E_{\P_A}(\sigma_1 - \sigma_0).
\end{equation}
Moreover, for all $\hat{x}^-\in E^{-\N_0}$,
\begin{equation}
E_{\P_A}(\sigma_1 - \sigma_0) = \int E_{\nu_{x^-}}(\sigma_1 - \sigma_0) \dd \P_A(x^-)  
\leq C(\phi) E_{\nu_{\hat{x}^-}} (\sigma_1 - \sigma_0),
\end{equation}
which gives $E_{\P_A}(\sigma_1 - \sigma_0) \leq C(\phi) E_\nu(\sigma_1 - \sigma_0)$ 
and completes the proof.
\end{proof}


\subsection{Decomposition of relative entropy}
\label{S4.3}

Write $H(Q)$ to denote the specific entropy of $Q$. Let 
\begin{equation}
\begin{aligned}
\Pinvfin(\widetilde{E}^\Z)
&= \{\Pinv(\widetilde{E}^\Z)\colon\,m_Q<\infty\},\\
\Pinvergfin(\widetilde{E}^\Z)
&= \{\Pinv(\widetilde{E}^\Z)\colon\,\text{Q is ergodic},\, m_Q<\infty\}.
\end{aligned}
\end{equation}

\begin{lemma}
\label{l:decEntropy}
Suppose that $\phi(0)<\infty$. Then, for all $Q\in \Pinvfin(\widetilde{E}^\Z)$,
\begin{equation}
\label{hreldecomp}
\begin{aligned}
H(Q \mid \P) &= -H(Q) - E_Q[\log \rho(\tau_1)] 
- m_Q E_{\Psi_Q}[\log \nu_{X_{(-\infty,0]}}(X_1)],\\
H(\Psi_Q \mid \nu) &= -H(\Psi_Q) - E_{\Psi_Q}[\log \nu_{X_{(-\infty,0]}}(X_1)].
\end{aligned}
\end{equation}
\end{lemma}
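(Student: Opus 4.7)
The plan is to combine the standard decomposition of specific relative entropy for stationary reference laws with the explicit factorization of $\P_{y^-}(Y_1 = y_1)$ provided by \eqref{eq:cond_prob_words}. The baseline identity I will use is, for any shift-invariant $\mu$ whose reference $\nu$ admits a regular version $\nu_{\omega_{(-\infty,0]}}$ of its conditional probabilities,
\begin{equation}
\label{eq:baseline}
H(\mu \mid \nu) = -H(\mu) - E_\mu\big[\log \nu_{\omega_{(-\infty,0]}}(\omega_1)\big].
\end{equation}
This is essentially the content of the Orey--Pelikan representation \eqref{eq:specrelentr} combined with $H(\mu) = -E_\mu[\log \mu_{\omega_{(-\infty,0]}}(\omega_1)]$, which is the Shannon--McMillan--Breiman formula for the specific entropy of a stationary process. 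The hypothesis $\phi(0)<\infty$ together with Lemma~\ref{l:telescoping} guarantees that $|\log\nu_{x^-}(X_1=a) - \log\nu_{\hat{x}^-}(X_1=a)| \leq \phi(0)$ for all pasts, and since $E$ is finite this renders the integrand bounded and the relevant expectations well-defined. Applying \eqref{eq:baseline} with $(\mu,\nu) = (\Psi_Q,\nu)$ yields the second identity in \eqref{hreldecomp} immediately.

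For the first identity, applying \eqref{eq:baseline} with $(\mu,\nu)=(Q,\P)$ reduces matters to identifying $E_Q[\log\P_{Y_{(-\infty,0]}}(Y_1)]$. I will compute $\log \P_{y^-}(Y_1=y_1)$ directly from \eqref{eq:cond_prob_words}: conditionally on the letter sequence and the past, $Y_1$ consists of the next $\tau_1$ letters with $\tau_1 \sim \rho$, so
\begin{equation}
\P_{y^-}(Y_1 = y_1) = \rho(|y_1|)\,\nu_{\kappa(y^-)}\big(X_{(0,|y_1|]} = y_1\big).
\end{equation}
The chain rule combined with the shift-invariance of $\nu$ then factorizes the letter probability as
\begin{equation}
\log \nu_{\kappa(y^-)}\big(X_{(0,|y_1|]}=y_1\big) = \sum_{k=1}^{|y_1|} \log \nu_{\kappa(y^-)\cdot y_{1,(0,k-1]}}(X_1 = y_{1,k}).
\end{equation}

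Finally, I will identify the $Q$-expectation of this telescoping sum with $m_Q\,E_{\Psi_Q}[\log\nu_{X_{(-\infty,0]}}(X_1)]$. Setting $F(x) = \log\nu_{x_{(-\infty,0]}}(x_1)$ for $x\in E^\Z$, the identity $\kappa(Y_{(-\infty,0]})\cdot Y_{1,(0,k-1]} = \kappa(Y)_{(-\infty,k-1]}$ allows me to rewrite the sum as $\sum_{k=0}^{|Y_1|-1} F(\theta^k\kappa(Y))$, whose $Q$-expectation equals exactly $m_Q\,E_{\Psi_Q}[F]$ by the very definition \eqref{psiQdef} of $\Psi_Q$. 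Assembling the three pieces completes the first identity in \eqref{hreldecomp}. The main obstacle is the baseline identity \eqref{eq:baseline} itself in the non-i.i.d.\ setting: it requires verifying the Shannon--McMillan--Breiman-type formula $H(\mu) = -E_\mu[\log\mu_{\omega^-}(\omega_1)]$ for the specific entropies of both $Q$ and $\Psi_Q$, which follows by the standard conditional-entropy chain-rule argument for stationary processes, together with ensuring integrability of the logarithms; on the letter side this is automatic from $\phi(0)<\infty$, while on the word side one passes to the extended-real convention when $E_Q[\log\rho(\tau_1)]=-\infty$ (in which case both sides of the first identity are $+\infty$ simultaneously).
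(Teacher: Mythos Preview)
Your proposal is correct and follows essentially the same route as the paper: start from the baseline identity $H(\mu\mid\nu)=-H(\mu)-E_\mu[\log\nu_{\omega^-}(\omega_1)]$, factorize $\P_{y^-}(Y_1=y_1)=\rho(|y_1|)\,\nu_{\kappa(y^-)}(X_{(0,|y_1|]}=y_1)$, telescope the letter probability via the chain rule, and recognize the resulting sum as $m_Q E_{\Psi_Q}[\log\nu_{X_{(-\infty,0]}}(X_1)]$ through the definition \eqref{psiQdef} of $\Psi_Q$. The paper's proof is terser but identical in substance; your additional remarks on integrability and the $E_Q[\log\rho(\tau_1)]=-\infty$ case are reasonable elaborations.
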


\begin{proof}
To get the first relation, write $H(Q \mid \P) = -H(Q) - \E_Q[\log \P_{Y_{(-\infty,0]}}(Y_1)]$,
\begin{equation}
\E_Q[\log \P_{Y_{(-\infty,0]}}(Y_1)] 
= \E_Q[\log \rho(\tau_1)] + \E_Q[\log \nu_{X_{(-\infty,0]}}(X_{(0,\tau_1]})]
\end{equation}
and (recall \eqref{psiQdef})
\begin{equation}
\label{eq:aux}
\E_Q[\log \nu_{X_{(-\infty,0]}}(X_{(0,\tau_1]})] 
= \E_Q\left[\sum_{k=0}^{\tau_1-1} \log \nu_{X_{(-\infty,k]}}(X_{k+1})\right] 
= m_Q \E_{\psi_Q} [\log \nu_{X_{(-\infty,0]}}(X_1)],
\end{equation}
where we use the abbreviation $\nu_{x^{-}}(x_\Lambda) = \nu_{x^{-}}(X_\Lambda=x_\Lambda)$,
$\Lambda \subset \N$. The second relation follows in a similar manner.
\end{proof}

\noindent
All terms in the right-hand side of \eqref{hreldecomp}, except possibly $H(Q)$, are finite
because $E$ is finite, $\rho$ satisfies \eqref{rhocond}, and $\phi(0)<\infty$. 

\begin{lemma}
\label{lem:asymptEntropy}
If $\nu$ satisfies condition {\rm (SV)}, then for all $Q\in\Pinvergfin(\widetilde{E}^\N)$,
\begin{equation}
\lim_{n\to\infty} \frac{1}{n} \log \nu(X_{(0,T_n]}) 
= m_Q E_{\Psi_Q}[\log \nu_{X_{(-\infty,0]}}(X_1)] \quad Q-a.s.
\end{equation}
\end{lemma}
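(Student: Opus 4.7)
The plan is to decompose $\log\nu(X_{(0,T_n]})$ into a Birkhoff-type ergodic sum along the letter process and apply the ergodic theorem for $(\Psi_Q,\theta)$, using (SV) to absorb the mismatch between finite-past and infinite-past conditionals.

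First, I would start from the chain rule under the stationary law $\nu$,
\[
\log\nu(X_{(0,T_n]}) = \sum_{k=1}^{T_n}\log\nu\bigl(X_k\bigm|X_{(0,k-1]}\bigr),
\]
and replace each finite-past conditional by its infinite-past counterpart. By the very definition of $\phi(k-1)=\phi(k-1,1)$, for any $x^-,\hat{x}^-\in E^{-\N_0}$ agreeing on $(-(k-1),0]$ and any $a\in E$,
\[
\bigl|\log\nu_{x^-}(X_1=a) - \log\nu_{\hat{x}^-}(X_1=a)\bigr| \leq \phi(k-1).
\]
Since, by stationarity and disintegration, $\nu(X_k=a\mid X_{(0,k-1]})$ is an average of $\nu_{x^-}(X_1=a)$ over the unseen tail with the revealed window held fixed, the same inequality transfers pointwise to
\[
\bigl|\log\nu(X_k\mid X_{(0,k-1]}) - \log\nu_{X_{(-\infty,k-1]}}(X_k)\bigr| \leq \phi(k-1).
\]
Summing and invoking (SV) yields
\[
\log\nu(X_{(0,T_n]}) = \sum_{k=1}^{T_n} \log\nu_{X_{(-\infty,k-1]}}(X_k) + R_n, \qquad |R_n|\leq \sum_{k\geq 0}\phi(k) < \infty,
\]
an error that is uniformly bounded in $n$.

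Second, I would set $f(x) = \log\nu_{x_{(-\infty,0]}}(x_1)$ and apply Birkhoff's theorem to the stationary system $(\Psi_Q,\theta)$ — ergodicity of $\Psi_Q$ follows from that of $Q$ by a standard Palm-theoretic argument (cf.~\cite{BGdH10}). This produces a $\theta$-invariant set $A$ of full $\Psi_Q$-measure on which $N^{-1}\sum_{k=0}^{N-1}f(\theta^kX) \to E_{\Psi_Q}[f]$. To transfer the convergence to $Q$-a.e.\ $Y$ along $X=\kappa(Y)$, I would use the identity $\Psi_Q(A^c) = m_Q^{-1}E_Q[|Y_1|\,\ind_{A^c}(\kappa(Y))]$, valid because $A^c$ is $\theta$-invariant; since $\Psi_Q(A^c)=0$ and $|Y_1|\geq 1$, this forces $\kappa(Y)\in A$ for $Q$-a.e.\ $Y$.

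Third, combining the previous step with $T_n/n\to m_Q$ $Q$-a.s.\ (Birkhoff applied to the stationary ergodic word lengths under $Q$), the bounded error from step one, and $T_n\to\infty$, I obtain
\[
\frac{1}{n}\log\nu(X_{(0,T_n]}) = \frac{T_n}{n}\cdot\frac{1}{T_n}\sum_{k=1}^{T_n}f(\theta^{k-1}X) + o(1) \longrightarrow m_Q E_{\Psi_Q}[f],
\]
which is the claim. The main obstacle is the transfer from $\Psi_Q$-a.s.\ to $Q$-a.s.\ convergence along the non-stationary realization $\kappa(Y)$; it rests on the ergodicity of $\Psi_Q$ inherited from $Q$ together with the $\theta$-invariance of the convergence set. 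A secondary technical point is that $f\leq 0$ need not be integrable, but Lemma~\ref{lem:changingpast} gives $\nu_{x^-}(X_1=a)\geq e^{-C(\phi)}\nu(X_1=a)$, so $f$ is bounded below off a $\nu$-null set of letters, which is enough for the ergodic theorem in its standard form.
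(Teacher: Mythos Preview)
Your argument is correct, but it takes a different route from the paper's. You run Birkhoff's theorem on the \emph{letter} process $(\Psi_Q,\theta)$ and then spend effort transferring the $\Psi_Q$-a.s.\ convergence to $Q$-a.s.\ convergence along $\kappa(Y)$, invoking ergodicity of $\Psi_Q$ and the $\theta$-invariance of the convergence set. The paper instead groups the sum by words,
\[
\log \nu_{X_{(-\infty,0]}}(X_{(0,T_n]}) = \sum_{i=0}^{n-1}\,\sum_{k=T_i}^{T_{i+1}-1}\log \nu_{X_{(-\infty,k]}}(X_{k+1}),
\]
and applies Birkhoff's theorem directly to the \emph{word} process under $Q$, with the inner sum as the function of $\tilde\theta^i Y$. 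This yields the limit $E_Q\bigl[\sum_{k=0}^{\tau_1-1}\log\nu_{X_{(-\infty,k]}}(X_{k+1})\bigr]$, which equals $m_Q E_{\Psi_Q}[\log\nu_{X_{(-\infty,0]}}(X_1)]$ by the defining identity \eqref{psiQdef}. The paper's route is shorter because it uses only the assumed ergodicity of $Q$ and avoids both the Palm-theoretic fact that $\Psi_Q$ inherits ergodicity and the $\Psi_Q$-to-$Q$ transfer step. Your approach, on the other hand, makes the role of $\Psi_Q$ as a genuine time-average more transparent. A small remark on your integrability discussion: since $E$ is finite and $\phi(0)<\infty$, any letter with $\nu(X_1=a)=0$ may simply be deleted from $E$, after which $f$ is bounded outright; the issue of ``off a $\nu$-null set'' versus ``$\Psi_Q$-a.s.'' then disappears.
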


\begin{proof}
First observe that \eqref{sandwich2} in Lemma \ref{lem:changingpast} gives
\begin{equation}
\label{lognu0}
C(\phi)^{-1} \nu_{X_{(-\infty,0]}}(X_{(0,T_n]}) \leq \nu(X_{(0,T_n]}) 
\leq  C(\phi) \nu_{X_{(-\infty,0]}}(X_{(0,T_n]}).
\end{equation}
Next write
\begin{equation}
\label{lognu1}
\log \nu_{X_{(-\infty,0]}}(X_{(0,T_n]}) 
= \sum_{k=0}^{T_n-1} \log \nu_{X_{(-\infty,k]}}(X_{k+1}) 
= \sum_{i=0}^{n-1} \sum_{k=T_i}^{T_{i+1}-1} \log \nu_{X_{(-\infty,k]}}(X_{k+1}).
\end{equation}
Use (\ref{lognu1}) and the ergodicity of $Q$ to obtain, for $Q$-a.s. $Y$,
\begin{equation}
\label{lognu3}
\lim_{n\to\infty} \frac{1}{n}\log \nu_{X_{(-\infty,0]}}(X_{(0,T_n]}) 
= \E_Q \left[ \sum_{k = 0}^{\tau_1 -1} \log \nu_{X_{(-\infty,k]}}(X_{k+1}) \right] 
= m_Q \E_{\Psi_Q} [\log \nu_{X_{(-\infty,0]}}(X_{1})].
\end{equation}
Combine (\ref{lognu0}--\ref{lognu3}) to get the claim.
\end{proof}


\subsection{Proof of quenched LDP}
\label{S4.4}  

We are now ready to give the proof of Theorem~\ref{queLDP}. 

\begin{proof}
The proof is an extension of the proof in \cite{BGdH10} for i.i.d.\ $\nu$. Since 
the latter is rather long, it is not possible to repeat all the ingredients here. 
Below we restrict ourselves to indicating the necessary \emph{modifications}, which 
are based on the results in Sections~\ref{S4.1}--\ref{S4.3}. We leave it to the 
reader to go over the full proof in \cite{BGdH10} and check that, indeed, these 
are the only modifications needed.

\medskip\noindent
\emph{Decomposition of relative entropies.} 
Replace \cite[Eqs.(1.25--1.26)]{BGdH10} by the relations in Lemma~\ref{l:decEntropy}. 
These relations allow us to decompose $I^{\rm que}$ as a sum of three terms that
appear in the proofs of the lower bound and the upper bound of the LDP as given in 
\cite[Section 1.3]{BGdH10}.

\medskip\noindent
\emph{Upper bound.}
The upper bound in \cite[Proposition 3.1]{BGdH10} is proved by first restricting to 
$Q\in \Pinvergfin(\tilde{E}^\Z)$. The event in \cite[Eq. (3.4)]{BGdH10} is used to 
define a suitable neighbourhood of $Q$. In that equation only the fourth line has 
to be replaced by
\begin{equation}
\left\{ \frac{1}{M} \log \nu({X_{(0,T_M]}}) \in 
m_Q E_{\Psi_Q}\big[\log \nu_{X_{(-\infty ,0]}}(X_1)\big] 
+ [-\epsilon_1,\epsilon_1]  \right\}.
\end{equation}
By Lemma~\ref{lem:asymptEntropy}, the intersection event in \cite[Eq. (3.4)]{BGdH10}
still has probability at least $1-\delta_1/4$ for $M$ large enough. Also
\cite[Sections 3.2--3.3]{BGdH10} are unchanged. The next (harmless) modification is in 
\cite[Eq.(3.39)]{BGdH10}, which has to be replaced by
\begin{equation}\label{eq:ineq_Ak}
\P\left(\cap_{1 \leq k \leq n} \{A_k = a_k\}\right) 
\leq [C(\phi)p]^{\sum_{1 \leq k \leq n} a_k},
\end{equation}
where $A_k$ is the indicator defined in \cite[Eqs.(3.36--3.37)]{BGdH10}, and $a_k \in 
\{0,1\}$ labels whether or not at some specific location of the letter sequence $X$ there 
is a string of letters arising from the concatenation of $Q${\it -typical} words (see 
\cite[Eq (3.5--3.6)]{BGdH10}). The inequality in (\ref{eq:ineq_Ak}) is proved via 
Lemma~\ref{lem:decoupling} and allows us to use \cite[Lemma 2.1]{BGdH10}, which 
controls the occurrence of certain patterns in $X$. We are then able to complete 
the argument in \cite[Section 3.4]{BGdH10}. 

A further step consists in removing the ergodicity assumption on $Q$. The argument 
in \cite[Section 3.5]{BGdH10} is long and technical, but carries over essentially verbatim
because Lemmas~\ref{lem:changingpast}--\ref{lem:decoupling} allow us, for arbitrary 
cylinders events, to replace $\nu$ by the product of its one-letter marginals at the expense 
of a finite factor.

\medskip\noindent
\emph{Lower Bound.} 
The lower bound in \cite[Proposition 4.1]{BGdH10} is proved by bounding from below 
the probability that $R_n$ lies in a neighbourhood of some $Q\in \Pinvfin(\tilde{E}^\Z)$. 
When $Q$ is ergodic we can use the same strategy as in \cite{BGdH10} (namely, by 
jumping to $Q${\it -typical} substrings of letters), but a modification is needed to go 
from \cite[Eq.(4.7)]{BGdH10} to \cite[Eq.(4.8)]{BGdH10}, since the increments of the 
$\sigma_{\ell}^{(M)}$, $\ell\in\N$, defined in \cite[Eq.(4.6)]{BGdH10} are no longer i.i.d. 
This can again be handled with the help of Lemma~\ref{lem:rectimes}. Note that the 
extra constant $\log C(\phi)$ is killed by letting $M \to \infty$ in \cite[Eq. (3.8)]{BGdH10}. 
Using ergodic decomposition, we get rid of the ergodicity assumption on $Q$, 
exactly as in \cite[Eqs. (4.9--4.11)]{BGdH10}.
\end{proof}

\medskip\noindent
\emph{Truncation limits.}
The argument in \cite[Section 3]{BGdH10} also uses \cite[Lemma A.1]{BGdH10},
which in our case is (\ref{eq:tr.limit}).
The proof in \cite[Appendix A]{BGdH10} carries over verbatim, with obvious modifications 
in \cite[Eqs. (A.3--A.4) and (A.13--A.14)]{BGdH10}. 


\section{Extension to Polish spaces}
\label{S5}

In this section we prove Theorem~\ref{LDPPolish}, i.e., we extend the LDP's in
Theorems~\ref{annLDP}--\ref{queLDP} from a finite letter space to a Polish letter 
space. We first prove the LDP's for a sequence of \emph{coarse-grained} finite 
letter spaces associated with a sequence of nested finite partitions of the Polish 
letter space. After that we apply the \emph{Dawson-G\"artner projective limit LDP} 
(see Dembo and Zeitouni~\cite{DeZe98}, Lemma 4.6.1). A somewhat delicate point is 
that (SV) for the full process does not necessarily imply (SV) for the coarse-grained 
process. Indeed, the first supremum in \eqref{eq:phidef} decreases under 
coarse-graining while the second supremum increases. The way out is to use (SV)
for the full process to prove the decoupling inequalities in Section~\ref{S4.1} 
for the coarse-grained process.

Let $X=(X_k)_{k\in\Z}$ be a stationary process on a Polish space $(E,d)$, with 
$(\nu_{x^-}(\cdot)$, $x^-\in E^{-\N_0})$ a regular version of the conditional 
probability $\nu(\cdot \mid X_{(-\infty, 0]})$ satisfying condition (SV), i.e.,
\begin{equation}
\label{cdtSV}
C(\phi) = \exp\left[\sum_{n\in\N_0} \phi(n)\right] < \infty,
\end{equation}
where
\begin{equation}
\phi(n) = \sup_{\substack{x^-,\hat{x}^- \in E^{-\N_0}\colon \\ 
d(x^-,\hat{x}^-)\leq 2^{-n}}} 
\sup_{\substack{A\in\cF_1\colon \\ \nu_{x^-}(A)>0}} 
|\log \nu_{x^-}(A) - \log \nu_{\hat{x}^-}(A)|
\end{equation}
with
\begin{equation}
d(x^-, \hat{x}^-) = \sum_{k\in\N_0} 2^{-(k+1)}\,
\big[1 \wedge d\big(x^-_{-k},\hat{x}^-_{-k}\big)\big].
\end{equation}
We assume that, for any $x^-,\hat{x}^- \in E^{-\N_0}$, the measures $\nu_{x^-}|_1 
= \nu_{x^-}(X_1 \in \cdot\,)$ and $\nu_{\hat{x}^-}|_1 = \nu_{\hat{x}^-}(X_1 \in \cdot\,)$ 
are equivalent, so that the Radon-Nikodym derivative $\dd \nu_{x^-}|_1/\dd 
\nu_{\hat{x}^-}|_1$ exists and
\begin{equation}
\sup_{\substack{A\in\cF_1\colon \\ \nu_{x^-}(A)>0}} 
\big[ \log \nu_{x^-}(A) - \log \nu_{\hat{x}^-}(A) \Big] 
= \supess \Big[ \log \frac{\dd \nu_{x^-}|_1}{\dd \nu_{\hat{x}^-}|_1}\Big],
\end{equation}
leading to the alternative definition
\begin{equation}
\phi(n) = \sup_{\substack{x^-, \hat{x}^- \in E^{-\N_0}\colon \\ d(x^-, \hat{x}^-)\leq 2^{-n}}} 
\supess \Big[ \log \frac{\dd \nu_{x^-}|_1}{\dd \nu_{\hat{x}^-}|_1} \Big].
\end{equation}
Similarly as in Section~\ref{S2.3}, we note that (SV) holds for i.i.d.\ processes, for
Markov chains of finite order with $\phi(0)<\infty$, and a subclass of chains with 
complete connections whose letter space is countable (Berbee~\cite{B87}). Other 
examples are rotators that are labelled by $\Z$, take values in the unit circle, and 
interact with each other according to a Hamiltonian with long-range potentials that 
have a sufficiently thin tail, as can be easily checked by hand.  

\medskip
The following lemma generalizes \eqref{sandwich1} in Lemma~\ref{lem:changingpast}.

\begin{lemma}
\label{lem:changingpastPolish1}
For all $x^-$,$\hat{x}^-\in E^{-\N_0}$ and $A\in\cF_{(0,\infty)}$,
\begin{equation}
C(\phi)^{-1} \nu_{\hat{x}^-}(A) \leq \nu_{x^-}(A) \leq C(\phi) \nu_{\hat{x}^-}(A).
\end{equation}
\end{lemma}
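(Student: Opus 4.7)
The plan is to adapt the telescoping argument behind Lemma~\ref{l:telescoping} and the proof of Lemma~\ref{lem:changingpast} to the Polish setting. The key geometric input is that the metric $d$ on $E^{-\N_0}$ is tail-weighted: two pasts $z^-,\hat{z}^-$ that coincide on positions $0,-1,\ldots,-(j-1)$ satisfy $d(z^-,\hat{z}^-) \leq \sum_{k\geq j}2^{-(k+1)} = 2^{-j}$. Hence any chain that modifies a past one coordinate at a time produces successive pasts at geometrically decaying $d$-distance, and $\phi(j)$ controls the $j$-th step via the essential-supremum (Radon--Nikodym) formulation introduced just above the lemma.

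First I would reduce to cylinder events $A\in\cF_{(0,k]}$, $k\in\N$. Disintegrating $\nu_{x^-}$ through its one-letter marginals gives
\begin{equation*}
\nu_{x^-}(A) = \int \ind_A(x_1,\ldots,x_k)\,\prod_{j=1}^{k}\dd\nu_{x^{-}x_1\cdots x_{j-1}}|_1(x_j),
\end{equation*}
and I would replace $x^-$ by $\hat{x}^-$ one factor at a time. At the $j$-th replacement the two pasts $x^{-}x_1\cdots x_{j-1}$ and $\hat{x}^{-}x_1\cdots x_{j-1}$ agree on their latest $j-1$ coordinates, hence lie at $d$-distance $\leq 2^{-(j-1)}$; the essential-supremum bound gives a pointwise Radon--Nikodym factor $e^{\phi(j-1)}$ between the corresponding one-letter marginals. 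Telescoping over $j=1,\ldots,k$ yields
\begin{equation*}
\nu_{x^-}(A) \leq \exp\Big(\sum_{j=0}^{k-1}\phi(j)\Big)\,\nu_{\hat{x}^-}(A) \leq C(\phi)\,\nu_{\hat{x}^-}(A),
\end{equation*}
uniformly in $k$, and the reverse inequality follows by interchanging the roles of $x^-$ and $\hat{x}^-$.

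To pass from cylinders to general $A\in\cF_{(0,\infty)}$, the class of sets satisfying the two-sided bound is closed under monotone limits and contains the algebra $\bigcup_k\cF_{(0,k]}$, so the monotone-class theorem yields the extension. The main obstacle is keeping the iterated-integral step rigorous in the Polish setting: one needs jointly measurable regular versions of $(z^-,\cdot)\mapsto \dd\nu_{z^-}|_1/\dd\nu_{\hat{z}^-}|_1$ in order to stack the one-step essential-supremum bounds into a multi-step bound. The equivalence of $\nu_{x^-}|_1$ and $\nu_{\hat{x}^-}|_1$ for arbitrary pairs of pasts, which is assumed in the setup just above the lemma and makes the essential-supremum formulation of $\phi(n)$ meaningful, is precisely what makes these derivatives genuine functions and allows the pointwise bound to propagate through the iterated integrals.
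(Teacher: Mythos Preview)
Your proposal is correct and follows essentially the same approach as the paper: both arguments factor the $k$-letter conditional law through its one-letter marginals and bound the $j$-th Radon--Nikodym factor by $e^{\phi(j-1)}$, using that the concatenated pasts $x^-x_1\cdots x_{j-1}$ and $\hat{x}^-x_1\cdots x_{j-1}$ agree on their last $j-1$ coordinates. The paper phrases this as a single product identity for $\dd\nu_{x^-}|_n/\dd\nu_{\hat{x}^-}|_n$ and omits the monotone-class extension you spell out, but the substance is identical.
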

\begin{proof}
For all $x^-,\hat{x}^-\in E^{-\N_0}$ and $n\in\N$,
\begin{align}
\frac{\dd \nu_{x^-}|_n}{\dd \nu_{\hat{x}^-}|_n}(x_1,\ldots, x_n) 
&= \frac{\dd \nu_{x^-}|_1}{\dd \nu_{\hat{x}^-}|_1}(x_1)
\times \frac{\dd \nu_{x^-x_1}|_1}{\dd \nu_{\hat{x}^-x_1}|_1}(x_2) 
\times \dots \times
\frac{\dd \nu_{x^-x_1 \cdots x_{n-1}}|_1}{\dd \nu_{\hat{x}^-x_1 \cdots x_{n-1}}|_1}(x_n)\\
&\leq \exp[\phi(0) + \phi(1) + \cdots + \phi(n-1)] \leq C(\phi),\nonumber
\end{align}
where $\nu_{x^-}|_n$ denotes the $n$-letter marginal conditional on $x^-$. This proves the claim.
\end{proof}

Let $\cE_c = \{E_1,\ldots,E_c\}$, $c\in\N$, be a finite partition of $E$. Identify 
$\cE_c^\Z$ with $\{1,\ldots,c\}^\Z$. Let $X^{(c)}=(X_k^{(c)})_{k\in\Z}$ on $\cE_c^\Z$ 
be the coarse-graining of $X$ on $E^\Z$ defined by
\begin{equation}
\label{eq:defcg}
X_n^{(c)} = \sum_{i=1}^c i\,\ind_{\{X_n \in E_i\}}.
\end{equation}
Write $\cF^{(c)}_{(0,\infty)}=\sigma(X^{(c)}_{(0,\infty)})$. The following lemma generalizes 
\eqref{sandwich2} in Lemma~\ref{lem:changingpast}.

\begin{lemma}
\label{lem:changingpastPolish2}
For all $x^- \in E^{-\N_0}$, $c \in \N$, $i^-,j^- \in \{1,\ldots,c\}^{-\N_0}$, $A \in 
\cF^{(c)}_{(0,\infty)}$ and $m,n\in\N$,
\begin{eqnarray}
&&C(\phi)^{-1} \nu_{x^-}(A) 
\leq \nu\left(A \mid X^{(c)}_{(-n,0]} = i^-_{(-n,0]}\right) 
\leq C(\phi) \nu_{x^-}(A),
\label{eq:cg1}\\[0.2cm]
&&C(\phi)^{-2} \nu\left(A \mid X^{(c)}_{(-m,0]} = j^-_{(-m,0]}\right) 
\leq \nu\big(A\mid X^{(c)}_{(-n,0]} = i^-_{(-n,0]}\big) \nonumber\\ 
&&\qquad\qquad\qquad\qquad\qquad\qquad\qquad\qquad 
\leq C(\phi)^2 \nu\left(A \mid X^{(c)}_{(-m,0]} = j^-_{(-m,0]}\right),
\label{eq:cg2}\\[0.2cm]
&&C(\phi)^{-1} \nu\left(A \mid X^{(c)}_{(-n,0]} = i^-_{(-n,0]}\right) 
\leq \nu(A) \leq C(\phi) \nu\left(A \mid X^{(c)}_{(-n,0]} = i^-_{(-n,0]}\right),
\label{eq:cg3}
\end{eqnarray}
provided that the events on which we condition have positive probability.
\end{lemma}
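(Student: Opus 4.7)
My approach is to reduce all three inequalities to Lemma~\ref{lem:changingpastPolish1} by establishing \eqref{eq:cg1} first and then deducing \eqref{eq:cg2} and \eqref{eq:cg3} as routine corollaries. The point to notice is that the coarse-graining \eqref{eq:defcg} is a measurable function of $X$, so $\cF^{(c)}_{(0,\infty)} \subset \cF_{(0,\infty)}$, and Lemma~\ref{lem:changingpastPolish1} applies to any $A \in \cF^{(c)}_{(0,\infty)}$ just as well.

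To prove \eqref{eq:cg1} I would set $B = \{X^{(c)}_{(-n,0]} = i^-_{(-n,0]}\} \in \cF_{(-\infty,0]}$ and use the regular conditional probability $\nu_{\tilde{x}^-}$ on the future to write
\begin{equation}
\nu(A \mid B) = \frac{1}{\nu(B)} \int_{B} \nu_{\tilde{x}^-}(A)\,\dd\nu(\tilde{x}^-).
\end{equation}
For every $\tilde{x}^- \in E^{-\N_0}$, Lemma~\ref{lem:changingpastPolish1} yields $C(\phi)^{-1}\nu_{x^-}(A) \leq \nu_{\tilde{x}^-}(A) \leq C(\phi)\nu_{x^-}(A)$, so pulling the $\tilde{x}^-$-independent quantity $\nu_{x^-}(A)$ out of the integral and cancelling $\nu(B)$ delivers \eqref{eq:cg1}.

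For \eqref{eq:cg2}, I would chain \eqref{eq:cg1} through an arbitrary common reference point $x^-$: the upper bound $\nu(A \mid X^{(c)}_{(-n,0]} = i^-_{(-n,0]}) \leq C(\phi)\nu_{x^-}(A)$ together with the lower bound $\nu_{x^-}(A) \leq C(\phi)\nu(A \mid X^{(c)}_{(-m,0]} = j^-_{(-m,0]})$ combine into the stated one-sided bound with factor $C(\phi)^2$, and the reverse inequality is symmetric. For \eqref{eq:cg3} I would integrate \eqref{eq:cg1} against $\dd\nu(x^-)$; since the left-hand side does not depend on $x^-$ and $\int \nu_{x^-}(A)\,\dd\nu(x^-) = \nu(A)$, both halves of the desired sandwich drop out at once.

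The only serious obstacle I foresee is notational bookkeeping: treating the coarse-grained events as genuine elements of $\cF_{(0,\infty)}$, and checking that the conditioning events have the positive probability assumed in the statement so that the ratios are well defined. Once this is in place the proof is purely formal; the heavy lifting has already been done in Lemma~\ref{lem:changingpastPolish1}, which provides a uniform Radon--Nikodym-type bound between any two past-conditioned measures $\nu_{x^-}$ and $\nu_{\hat{x}^-}$ that passes unscathed through averaging and integration over past events.
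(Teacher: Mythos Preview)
Your proposal is correct and follows essentially the same route as the paper: first reduce \eqref{eq:cg2} and \eqref{eq:cg3} to \eqref{eq:cg1} (by chaining through a common $x^-$, respectively by integrating $x^-$ against $\nu$), and then derive \eqref{eq:cg1} by expressing the conditional probability as an average of $\nu_{\tilde x^-}(A)$ over the conditioning event and invoking the uniform bound of Lemma~\ref{lem:changingpastPolish1}. The only cosmetic difference is that the paper first shifts by $n$ so that the conditioning event lives on $(0,n]$ and then disintegrates the numerator once more over $x_{(0,n]}$, whereas you observe directly that $B\in\cF_{(-\infty,0]}$ and write $\nu(A\mid B)=\nu(B)^{-1}\int_B\nu_{\tilde x^-}(A)\,\dd\nu(\tilde x^-)$; the two computations are equivalent.
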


\begin{proof}
Note that \eqref{eq:cg2} follows by applying \eqref{eq:cg1} twice, while \eqref{eq:cg3} 
follows by integrating $x^-$ w.r.t.\ $\nu$ in \eqref{eq:cg1}. Therefore it suffices to 
prove \eqref{eq:cg1}. To that end write
\begin{equation}
\nu\left(A\mid X^{(c)}_{(-n,0]}=i^-_{(-n,0]}\right)
= \frac{\int_{E^{-\N_0}} \nu_{\tilde{x}^-}(\{X^{(c)}_{(0,n]}=i^-_{(-n,0]}\}\cap \theta^{-n}A)\,
\dd\nu(\tilde{x}^-)}
{\nu(X^{(c)}_{(-n,0]}=i^-_{(-n,0]})}.
\end{equation}
The integral in the numerator equals
\begin{equation}
\int_{E^{-\N_0}} \Big[ \int_{E^n} \dd\nu_{\tilde{x}^-}(x_{(0,n]})\,
\ind_{\{x^{(c)}_{(0,n]} = i^-_{(-n,0]}\}}\,
\nu_{\tilde{x}^-,x_{(0,n]}}(A) \Big]\,\dd\nu(\tilde{x}^-),
\end{equation}
from which the claim follows via Lemma~\ref{lem:changingpastPolish1}.
\end{proof}

The following lemma is another consequence of (SV).

\begin{lemma}
\label{lem:spec_rel_ent_superadd}
Under condition {\rm (SV)},
\begin{equation}
H(Q \mid \P) = \sup_{n\in\N} \frac{1}{n} \Big\{ h\big(Q(Y_{(0,n]} \in\cdot) \mid \P(Y_{(0,n]} \in\cdot)\big)
- \log C(\phi) \Big\},
\end{equation}
and the supremum is also a limit. The same result holds when $(\P,Q)$ is replaced by $(\P^{(c)},Q^{(c)})$
or $(\nu,\Psi_Q)$ or $(\nu^{(c)},\Psi_Q^{(c)})$.
\end{lemma}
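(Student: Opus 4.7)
Write $h_n = h(Q(Y_{(0,n]}\in\cdot)\mid\P(Y_{(0,n]}\in\cdot))$. The strategy is to show that the shifted sequence $b_n = h_n - \log C(\phi)$ is \emph{superadditive}, so that Fekete's lemma forces $\lim_n b_n/n = \sup_n b_n/n$; since $\log C(\phi)/n \to 0$, both quantities coincide with $\lim_n h_n/n$, which one then identifies with $H(Q\mid\P)$. The key input is a product-type decoupling of the reference law, inherited from (SV) through Lemma~\ref{lem:decoupling}.

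\textbf{Step 1: Decoupling of $\P_{m+n}$.} Conditioning on the renewal times $\tau=(\tau_i)$, the words $Y_{(0,m]}$ and $Y_{(m,m+n]}$ are measurable functions of $X_{(0,T_m]}$ and $X_{(T_m,T_{m+n}]}$, which live on disjoint intervals. Applying Lemma~\ref{lem:decoupling} to $\nu$ with two blocks, using stationarity of $\nu$ and independence of $(\tau_1,\dots,\tau_m)$ from $(\tau_{m+1},\dots,\tau_{m+n})$, and then integrating over the renewal lengths gives, for every product cylinder $A\times B\subset\tilde{E}^m\times\tilde{E}^n$,
\begin{equation*}
\P(Y_{(0,m]}\in A,\,Y_{(m,m+n]}\in B) \leq C(\phi)\,\P(Y_{(0,m]}\in A)\,\P(Y_{(0,n]}\in B).
\end{equation*}
Rectangles form a $\pi$-system generating the product $\sigma$-algebra, so by a standard outer-measure argument $\P_{m+n}\leq C(\phi)\,(\P_m\otimes\P_n)$ as measures on $\tilde{E}^{m+n}$, i.e.\ $d\P_{m+n}/d(\P_m\otimes\P_n)\leq C(\phi)$ almost surely.

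\textbf{Step 2: Superadditivity of $b_n$.} Assuming $Q_{m+n}\ll\P_{m+n}$ (otherwise the bound below is trivially true), the Radon-Nikodym bound from Step~1 yields
\begin{equation*}
h_{m+n} = \int\log\frac{dQ_{m+n}}{d\P_{m+n}}\,dQ_{m+n} \geq -\log C(\phi) + h(Q_{m+n}\mid\P_m\otimes\P_n).
\end{equation*}
For the product reference $\P_m\otimes\P_n$ the chain rule factorises the right-hand entropy as $h(Q_m\mid\P_m)$ plus the $Q_m$-average of $h(Q_{m+n}(Y_{(m,m+n]}\in\cdot\mid y_{(0,m]})\mid\P_n)$; by convexity of $h(\cdot\mid\P_n)$ in its first argument and stationarity of $Q$, this average dominates $h(Q_n\mid\P_n)=h_n$. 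Combining,
\begin{equation*}
h_{m+n} - \log C(\phi) \geq (h_m - \log C(\phi)) + (h_n - \log C(\phi)),
\end{equation*}
so $b_n$ is superadditive and Fekete's lemma gives the asserted sup = lim.

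\textbf{Step 3: Identification and extensions.} Since $\log C(\phi)/n\to 0$, the common limit equals $\lim_n h_n/n$; its equality with the Orey-Pelikan form \eqref{eq:specrelentr} of $H(Q\mid\P)$ follows from the chain-rule decomposition $h_n=\sum_{k=0}^{n-1}E_Q[h(Q(Y_{k+1}\mid Y_{(0,k]})\mid\P(Y_{k+1}\mid Y_{(0,k]}))]$, a reverse-martingale convergence of the conditional one-word laws to their conditionings on the whole past (made admissible by the Feller property from (SV) established in Theorem~\ref{annLDP}), and Ces\`aro averaging. The three other cases are obtained verbatim once the corresponding decoupling is in force: Lemma~\ref{lem:changingpastPolish1} supplies it for $(\nu,\Psi_Q)$, Lemma~\ref{lem:changingpastPolish2} for $(\nu^{(c)},\Psi_Q^{(c)})$, and the argument of Step~1 with Lemma~\ref{lem:changingpastPolish2} in place of Lemma~\ref{lem:decoupling} handles $(\P^{(c)},Q^{(c)})$. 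The main obstacle is the extension of the rectangle-level decoupling to a genuine Radon-Nikodym bound on the product $\sigma$-algebra (Step~1); everything else is routine.
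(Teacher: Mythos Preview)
Your argument is correct and reaches the same superadditivity inequality $h_{m+n}\ge h_m+h_n-\log C(\phi)$ as the paper, but via a different mechanism. The paper uses the Donsker--Varadhan variational formula $h_n=\sup_f\{E_Q f-\log E_\P e^f\}$, restricts to additively separable test functions $f_1(Y_{(0,n]})+f_2(Y_{(n,n+m]})$, and applies the decoupling directly to the log-Laplace transform $E_\P[e^{f_1}e^{f_2}]\le C(\phi)\,E_\P[e^{f_1}]E_\P[e^{f_2}]$; superadditivity then drops out in one line. You instead upgrade the rectangle-level decoupling to a global Radon--Nikodym bound $\P_{m+n}\le C(\phi)\,\P_m\otimes\P_n$ and combine it with the chain rule and Jensen's inequality for $h(\cdot\mid\P_n)$. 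Both are valid; the paper's route is slightly cleaner because it avoids the outer-measure extension step you flag as ``the main obstacle'' (that step is fine since rectangles form a semi-ring and both measures are finite, so the Carath\'eodory outer measures agree with the given measures on the product $\sigma$-algebra). One small remark: in Step~1 you invoke Lemma~\ref{lem:decoupling}, which is stated for finite $E$; in the Polish setting the relevant input is Lemma~\ref{lem:changingpastPolish1}, as you correctly note for the other three pairs. Your Step~3 identification of $\lim_n h_n/n$ with the Orey--Pelikan form \eqref{eq:specrelentr} is sketched rather than proved, but this equivalence is standard under (RM)/(CD) and is implicit in the references the paper cites.
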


\begin{proof}
We prove the result for $(\P,Q)$. The other cases are similar. For $n\in\N$, let $\cB(\tilde{E}^n)$ 
be the set of bounded measurable functions on $\tilde{E}^n$. From the variational characterization of 
relative entropy (see Dembo and Zeitouni~\cite[Lemma 6.2.13]{DeZe98}), we get that for all $n,m\in\N$,
\begin{align}
&h\big(Q(Y_{(0,n+m]} \in\cdot)\mid \P(Y_{(0,n+m]} \in\cdot)\big) \\
&= \sup_{f\in \cB(\tilde{E}^{n+m})} \Big\{ E_Q[f(Y_{(0,n+m]})] 
- \log E_{\P}\Big[e^{f(Y_{(0,n+m]})} \Big] \Big\}\nonumber\\
& \geq \suptwo{f_1 \in \cB(\tilde{E}^n)}{f_2 \in \cB(\tilde{E}^m)} 
\Big\{ E_Q\big[f_1(Y_{(0,n]})+f_2(Y_{(n,n+m]})\big] - \log E_{\P}\Big[e^{f_1(Y_{(0,n]})}
e^{f_2(Y_{(n,n+m]})} \Big] \Big\}.\nonumber
\end{align}
Using the decoupling inequality of Lemma \ref{lem:changingpastPolish1} and the stationarity of $\P$ 
and $Q$, we may bound the right-hand side from below by
\begin{align}
&\sup_{f_1\in \cB(\tilde{E}^{n})} \Big\{ E_Q[f(Y_{(0,n]})] - \log E_{\P}\Big[e^{f(Y_{(0,n]})} \Big] \Big\}\\
&\qquad + \sup_{f_2\in \cB(\tilde{E}^{m})} \Big\{ E_Q[f(Y_{(0,m]})] 
- \log E_{\P}\Big[e^{f(Y_{(0,m]})} \Big] \Big\} -\log C(\phi) \nonumber\\
&= h\big(Q(Y_{(0,n]} \in\cdot) \mid \P(Y_{(0,n]} \in\cdot)\big) 
+ h\big(Q(Y_{(0,m]} \in\cdot) \mid \P(Y_{(0,m]} \in\cdot)\big) -\log C(\phi). \nonumber
\end{align}
The claim now follows from the superadditivity of the sequence $\{h(Q(Y_{(0,n]} \in\cdot) \mid 
\P(Y_{(0,n]} \in \cdot)) - \log C(\phi)\}_{n\in\N}$.
\end{proof}

In what follows we need the notion of \emph{conditional local absolute continuity} (which is 
weaker than absolute continuity). 

\begin{definition}
Let $F$ be a finite space equipped with the discrete topology and discrete $\sigma$-algebra, and 
let $\lambda,\mu$ be two stationary probability measures on $F^{\Z}$ with respective regular 
conditional probabilities $(\lambda_{x^-},\,x^-\in F^{-\N_0})$ and $(\mu_{x^-},\, x^-\in F^{-\N_0})$. 
The law $\lambda$ is said to be conditionally locally absolutely continuous w.r.t.\ to the law 
$\mu$ \rm{(}written as $\lambda\,\,{\ll}_{\rm{cond}}\,\,\mu$\rm{)} when, for $\lambda$-a.a.\ 
$x^-$ and all $n\in\N$, $\lambda_{x^-}|_n$ is absolutely continuous w.r.t.\ to $\mu_{x^-}|_n$ 
\rm{(}written as $\lambda_{x^-}|_n \ll \mu_{x^-}|_n$\rm{)}, where $\lambda_{x^-}|_n$ and 
$\mu_{x^-}|_n$ are the marginal laws on the first $n$ coordinates.
\end{definition}

\noindent
Note that the set $\{x^-\in F^{-\N_0}\colon\, \lambda_{x^-}|_n \ll \mu_{x^-}|_n\}$ 
is measurable because it can be written as
\begin{equation}
\bigcap_{A \in \{0,1\}^{F^n}} \{\mu_{x^-}(A)>0\} \cup (\{\mu_{x^-}(A)=0\} \cap \{\lambda_{x^-}(A)=0\}).
\end{equation}

\begin{proof} 
We need to prove both the annealed LDP and the quenched LDP.

\medskip\noindent
{\bf Annealed LDP.} Lemma~\ref{lem:changingpastPolish1} shows that under condition (SV)
Lemmas~\ref{l:fromphitopsi}--\ref{l:telescoping} carry over from finite letters 
to Polish letters. Condition (CD) in Proposition~\ref{pr:Orey} is also implied by condition (SV), by an argument similar to the case of finite letters.
Therefore the ratio-mixing and continuous dependence properties of Orey and Pelikan~\cite{OP88}
again yields the annealed LDP.

\medskip\noindent
{\bf Quenched LDP.} The proof comes in 4 steps. 

\medskip\noindent
{\bf 1.} 
We first use Lemmas~\ref{lem:changingpastPolish1}--\ref{lem:changingpastPolish2} to 
show that Lemmas~\ref{lem:decoupling}--\ref{lem:asymptEntropy} carry over to the 
coarse-grained process $X^{(c)}$ defined in \eqref{eq:defcg} for every $c\in\N$. 
This is straightforward, except that Lemma~\ref{lem:asymptEntropy} carries over 
to $Q\in\Pinverg((\widetilde{\cE_c})^\Z)$ only when $\Psi_Q\,\,{\ll}_{\rm{cond}}\,\,
\nu^{(c)}$, where $\nu^{(c)}$ denotes the law of $X^{(c)}$. We will see in Step 4 
below that, because $H(\Psi_Q \mid \nu^{(c)})=\infty$ when $\Psi_Q\,\,{\ll}_{\rm{cond}}
\,\,\nu^{(c)}$ fails, this restriction does not affect the LDP.

\medskip\noindent
{\bf 2.}
To prove the restricted version of Lemma~\ref{lem:asymptEntropy}, 
let $Q\in\Pinverg((\widetilde{\cE_c})^\Z)$ be such that $\Psi_Q\,\,{\ll}_{\rm{cond}}\,\,
\nu^{(c)}$. Using the notation introduced 
below \eqref{eq:aux}, we know from Lemma~\ref{lem:changingpastPolish2} (by letting 
$n\to \infty$ in Eq.~(\ref{eq:cg3}) and using the Martingale Convergence Theorem) that, 
for $\nu^{(c)}$-a.a.\ $X^{(c)}_{(-\infty,0]}$,
\begin{equation}
\nu\Big(C(\phi)^{-1} \nu_{X^{(c)}(-\infty,0]}\big(X^{(c)}_{(0,n)}\big) 
\leq \nu\big(X^{(c)}_{(0,n)}\big) 
\leq C(\phi) \nu_{X^{(c)}(-\infty,0]}\big(X^{(c)}_{(0,n)}\big) ~\Big|~ 
X^{(c)}_{(-\infty,0]}\Big) = 1.
\end{equation}
By conditional local absolute continuity we have, for $\Psi_Q$-a.a.\ $X^{(c)}_{(-\infty,0]}$,
\begin{equation}
\Psi_Q\Big(C(\phi)^{-1} \nu_{X^{(c)}(-\infty,0]}\big(X^{(c)}_{(0,n)}\big) 
\leq \nu\big(X^{(c)}_{(0,n)}\big) 
\leq C(\phi) \nu_{X^{(c)}(-\infty,0]}\big(X^{(c)}_{(0,n)}\big) 
~\Big|~ X^{(c)}_{(-\infty,0]}\Big) = 1.
\end{equation}
This implies that, for $\Psi_Q$-a.a.\ $X^{(c)}_{(-\infty,0]}$,
\begin{equation}
C(\phi)^{-1} \nu_{X^{(c)}(-\infty,0]}\big(X^{(c)}_{(0,n)}\big) 
\leq \nu\big(X^{(c)}_{(0,n)}\big) 
\leq C(\phi) \nu_{X^{(c)}(-\infty,0]}\big(X^{(c)}_{(0,n)}\big),
\end{equation}
which settles the restricted version of Lemma~\ref{lem:asymptEntropy}.

\medskip\noindent
{\bf 3.}
By the same argument as in Section~\ref{S4.4}, we now know that the quenched LDP
holds for $X^{(c)}$ for all $c\in\N$ (see Step 4 below for comments). Picking for 
$\cE_c = \{E_1,\ldots,E_c\}$, $c\in\N$, a \emph{nested sequence} of finite partitions 
of $E$ as in \cite[Section 8]{BGdH10}, we conclude from the Dawson-G\"artner projective 
limit LDP that the quenched LDP also holds for $X$, with rate function
\begin{equation}
I^\mathrm{que}(Q) = \sup_{c\in\N} I_c^\mathrm{que}(Q^{(c)}),\quad Q\in \Pinv(\tilde{E}^{\bbZ}),
\end{equation}
where $Q^{(c)}$ is the coarse-graining of $Q$, and $I_c^\mathrm{que}$ is the coarse-grained 
rate function. The argument in \cite[Section 8]{BGdH10} can be adapted to show, with the
help of Lemma~\ref{lem:spec_rel_ent_superadd}, that the supremum equals the rate function 
given in \eqref{Ique}, i.e., the coarse-grained relative entropies converge to the full relative 
entropies as $c\to\infty$. (Deuschel and Stroock~\cite[Lemma 4.4.15]{DeSt89} implies that 
the coarse-grained relative entropies are monotone in $c$.)

\medskip\noindent
{\bf 4.}
To obtain the quenched LDP, we must prove Eq.(3.1) and Eq.(4.1) in \cite{BGdH10} for the coarse-grained 
process. In Steps 1--3 this has already been achieved for $Q\in\Pinvfin((\widetilde
{\cE_c})^\Z)$ with $\Psi_Q\,\,{\ll}_{\rm{cond}}\,\,\nu^{(c)}$. Eq.(4.1) in \cite{BGdH10} 
trivially carries over when the latter restriction fails, but for Eq.(3.1) an additional
argument is needed. We must show that there exists a sequence $(\mathcal{O}_k(Q))_{k\in\N}$ 
of shrinking open neighborhoods of $Q$ such that 
\begin{equation}
\label{eq:shrinkopen}
\lim_{k\to\infty} \limsup_{N\to\infty} \frac{1}{N}\,\log 
\P^{(c)}\big(R^{(c)}_N \in \mathcal{O}_k(Q) \mid X^{(c)}\big) = -\infty,
\end{equation}
where $\P^{(c)}$ denotes the coarse-graining of $\P$. This can be done via an annealed 
estimate. Indeed, for $\nu^{(c)}$-a.a.\ $X^{(c)}$, 
\begin{equation}
\begin{aligned}
\limsup_{N\to\infty} \frac{1}{N}\,\log 
\P^{(c)}\big(R^{(c)}_N \in \mathcal{O}_k(Q) \mid X^{(c)}\big) 
&\leq \limsup_{N\to\infty} \frac{1}{N}\,\log 
\P^{(c)}\big(R^{(c)}_N \in \mathcal{O}_k(Q) \big)\\
&\leq - \inf_{Q' \in \overline{\mathcal{O}_k(Q)}} H(Q' \mid \P^{(c)}),
\end{aligned}
\end{equation}
where the last inequality follows from the annealed LDP. (This needs justification, since 
the annealed LDP was proved under condition (SV), which is not necessarily satisfied for
$\nu^{(c)}$. However, by Lemma \ref{lem:changingpastPolish2}, a decoupling inequality holds 
for a.a.\ pairs of coarse-grained pasts. Therefore there must be a regular conditional 
probability of $\nu^{(c)}$ satisfying Orey and Pelikan's ratio-mixing condition.) A 
sequence $(\mathcal{O}_k(Q))_{k\in\N}$ satisfying (\ref{eq:shrinkopen}) is easily obtained 
by letting $k\to\infty$ and using the lower semi-continuity of $Q' \mapsto H(Q' \mid 
\P^{(c)})$ together with the fact that $H(Q \mid P^{(c)}) \geq m_Q H(\Psi_Q \mid \nu^{(c)}) 
= \infty$ (see \cite[Eqs. (1.30--1.32)]{BGdH10}).
\end{proof}


\end{document}